\newtheorem{Theorem}{Theorem}[section]
\newtheorem{Lemma}[Theorem]{Lemma}
\newtheorem{Proposition}[Theorem]{Proposition}
\newtheorem{Definition}[Theorem]{Definition}
\newtheorem{Text}[Theorem]{}
\newcommand{\bA}{{\mathbb A}}
\newcommand{\bB}{{\mathbb B}}
\newcommand{\bC}{{\mathbb C}}
\newcommand{\bD}{{\mathbb D}}
\newcommand{\bF}{{\mathbb F}}
\newcommand{\bK}{{\mathbb K}}
\newcommand{\bP}{{\mathbb P}}
\newcommand{\bR}{{\mathbb R}}
\newcommand{\bX}{{\mathbb X}}
\newcommand{\cA}{{\mathcal A}}
\newcommand{\cB}{{\mathcal B}}
\newcommand\Grp{\mathbf{Grp}}
\newcommand\Grpd{\mathbf{Grpd}}
\newcommand\Ker{\mathrm{Ker}}
\newcommand\bKer{\mathbb K\mathrm{er}}
\newcommand\K{\mathrm{K}}
\newcommand\id{\mathrm{id}}
\newcommand\Id{\mathrm{Id}}
\title{The snail lemma for internal groupoids}
\author{S. Mantovani, G. Metere, E.M. Vitale}
\keywords{internal groupoid, snail lemma, fibration, snake lemma}
\address
{Dipartimento di matematica, Universit\`a degli studi di Milano\\
Via C. Saldini 50, 20133 Milano, Italia.\\
Dipartimento di matematica e informatica, Universit\`a degli studi di Palermo\\
Via Archirafi 34, 90123 Palermo, Italia.\\
Institut de recherche en math\'ematique et physique, Universit\'e catholique de Louvain\\
Chemin du Cyclotron 2, B 1348 Louvain-la-Neuve, Belgique.}
\thanks{The first and the second author acknowledge the financial support of the I.N.D.A.M.\ Gruppo Nazionale per le 
Strutture Algebriche, Geometriche e le loro Applicazioni.}
\begin{document}

\maketitle

%\begin{center}
%{\includegraphics[width=6cm]{Lumaca.pdf}}
%\end{center}

\begin{abstract}
We establish a generalized form both  of the Gabriel-Zisman exact sequence associated with a pointed functor between pointed groupoids, 
and of the Brown exact sequence associated with a fibration of pointed groupoids. Our generalization consists in replacing pointed 
groupoids with groupoids internal to a pointed regular category with reflexive coequalizers.
\end{abstract}

\tableofcontents

\section{Introduction}

One of the fundamental results stated in P. Gabriel and M. Zisman's book \cite{GZ} on categories of fractions and homotopy theory is the construction of a six terms exact sequence from a pointed functor between pointed groupoids. In order to obtain their exact sequence, Gabriel and Zisman make use of a special case of the comma square, and more precisely of what is sometimes called strong h-kernel (or strong homotopy fiber) of a pointed functor. Soon after Gabriel and Zisman's book, and certainly independently from Gabriel and Zisman, R.\ Brown described in \cite{RB} a six terms exact sequence associated with a fibration of pointed groupoids. Since Brown replaces arbitrary pointed functors with the more restrictive notion of fibration, he can use categorical kernels (or strict fibers) instead of strong h-kernels to construct his sequence. Moreover, the two results are logically equivalent: if the pointed functor is a fibration, the canonical comparison from the kernel to the strong h-kernel is an equivalence, so that the Gabriel-Zisman sequence reduces to the Brown sequence. Vice versa, any functor between groupoids can be (up to an equivalence)   turned into a fibration, and (in the pointed case) the kernel of the fibration is nothing but the strong h-kernel of the original functor.

The Gabriel-Zisman and Brown exact sequences have plenty of important applications, especially in algebraic topology and in 
non-abelian group homology. Since methods from internal category theory are currently used to study abelian and non-abelian homological algebra (see for example \cite{DB1, GJ, EG} and the references therein), our aim in this paper is to give a generalization of both Gabriel-Zisman and Brown exact sequences, replacing pointed groupoids with groupoids internal to a pointed regular category with reflexive coequalizers. More in detail, the layout of this paper is as follows. In Section 2 we review some basic facts on strong h-pullbacks and, in particular, on strong h-kernels in the 2-category $\Grpd(\cA)$ of groupoids internal to a suitable category $\cA.$ Section 3 is completely devoted to the construction of a six terms exact sequence in $\cA$ starting from an internal functor. The sequence involves the strong h-kernel of the internal functor, the connected components functor $\pi_0,$ and the automorphisms functor $\pi_1.$ In Section 4 we show that, if the internal functor is an internal fibration, we can replace the strong h-kernel with the categorical kernel. This fact is based on a result established in the companion paper \cite{JMMVFibr}, where fibrations of internal groupoids are studied more carefully. 
Section 5 shows how to get a (split epi) fibration from any internal functor. Finally, a simple application of the exact sequence to $\pi_0$ and $\pi_1$ is explained in Section 6.

If the base category $\cA$ is the category of groups, the exact sequence of Section 4 already appears in \cite{DKV} (and in \cite{KMV} 
as part of a ``ziqqurath'' of exact sequences). In fact, in this case 
the sequence is constructed from a monoidal functor, not just from an internal functor. Since monoidal functors between groupoids in 
groups are fractions of internal functors (see \cite{AN, FractEV}), one could wonder if the exact sequence can be still constructed starting
from a butterfly or a fractor (butterflies and fractors replace monoidal functors to describe fractions with respect to weak equivalences
of internal groupoids when the base category $\cA$ is semi-abelian, see \cite{AMMV}, or efficiently regular, see \cite{MMV}). The answer is positive,
but the proof makes use of the machinery of bicategories of fractions (\cite{Ben, Pronk}), and therefore we treat this problem in a separate 
paper \cite{JMMVFract}.

To end, an explication about terminology. If the base category $\cA$ is abelian (or semi-abelian), then via the normalization process which associates a commutative square (or a morphism of internal crossed modules) with an internal functor, the exact sequence \emph{\`a la} Brown and the exact sequence \emph{\`a la} Gabriel-Zisman coincide with the classical exact sequence of the snake lemma and with the more recent exact 
sequence of the snail lemma (see \cite{DB2, SnailEV, SnailZJEV}). This is why we adopt the names of snail lemma and snake lemma for our generalization of, respectively, the Gabriel-Zisman and Brown results.

\hfill

Note that in this paper, the composition of two arrows
$$\xymatrix{ \ar[r]^-{f} & \ar[r]^-{g} &}$$
will be denoted by $f \cdot g$.

%\medskip
%
%The first and the second author acknowledge the financial support of the I.N.D.A.M.\ Gruppo Nazionale per le Strutture Algebriche, Geometriche e le loro Applicazioni.

\section{Preliminaries on 2-categories and internal groupoids}

%Let us fix a 2-category $\underline\cB$ with invertible 2-cells. The underlying category of $\underline\cB$ will be denote simply by $\cB.$
%We adopt the following terminology.

We adopt the following definition of strong h-pullback in a 2-category (see \cite{GR} and \cite{JMMVFibr} 
for basic facts on (strong) h-pullbacks).

\begin{Definition}\label{DefStrongHpb}{\rm 
Let $F \colon \bA \to \bB$ and $G \colon \bC \to \bB$ be 1-cells in a 2-category $\cB$ with invertible 2-cells.
A strong h-pullback of $F$ and $G$ is a diagram of the form
$$\xymatrix{\bP \ar[r]^{G'} \ar[d]_{F'} & \bA \ar[d]^{F} \\
\bC \ar@{}[ru]^{\varphi}|{\Rightarrow} \ar[r]_{G} & \bB}$$
satisfying the following universal property :
\begin{enumerate}

\item For any diagram of the form 
$$\xymatrix{\bX \ar[r]^{H} \ar[d]_{K} & \bA \ar[d]^{F} \\
\bC \ar@{}[ru]^{\mu}|{\Rightarrow} \ar[r]_{G} & \bB}$$
there exists a unique 1-cell $T \colon \bX \to \bP$ such that $T \cdot G' = H, T \cdot F' = K$ and $T \cdot \varphi = \mu.$

\item Given 1-cells $L, M \colon \bX \rightrightarrows \bP$ and 2-cells $\alpha \colon L \cdot F' \Rightarrow  M \cdot F'$ and
$\beta \colon L \cdot G' \Rightarrow M \cdot G',$ if
$$\xymatrix{L \cdot F' \cdot G \ar@{=>}[r]^-{\alpha \cdot G} \ar@{=>}[d]_{L \cdot \varphi} & M \cdot F' \cdot G \ar@{=>}[d]^{M \cdot \varphi} \\
L \cdot G' \cdot F \ar@{=>}[r]_-{\beta \cdot F} & M \cdot G' \cdot F}$$
commutes, then there exists a unique 2-cell $\mu \colon L \Rightarrow M$ such that $\mu \cdot F' = \alpha$ and $\mu \cdot G' = \beta.$

\end{enumerate}
}\end{Definition}

\begin{Text}\label{TextGrpdA}{\rm
We assume now that $\cA$ is a category with finite limits and reflexive coequalizers. When needed, we shall tacitly assume that $\cA$ is pointed (it has an object 0 which is initial and terminal). We denote by $\Grpd(\cA)$ the 2-category of groupoids, functors and natural transformations 
internal to $\cA.$ The notation for a groupoid $\bB$ in $\cA$ is 
$$\bB=(\xymatrix{B_1 \times_{c,d}B_1 \ar[r]^-{m} & B_1 \ar@<0.8ex>[r]^{d} \ar@<-0.8ex>[r]_{c} & B_0 \ar[l]|{e}} \;,\;
\xymatrix{B_1 \ar[r]^{i} & B_1})$$
where
$$\xymatrix{B_1 \times_{c,d}B_1 \ar[r]^-{\pi_2} \ar[d]_{\pi_1} & B_1 \ar[d]^{d} \\
B_1 \ar[r]_{c} & B_0}$$
is a pullback. The notation for a natural transformation $\alpha \colon F \Rightarrow G \colon \bA \rightrightarrows \bB$ is
$$\xymatrix{A_1 \ar@<0,5ex>[rr]^-{F_1} \ar@<-0,5ex>[rr]_-{G_1} \ar@<-0,5ex>[d]_{d} \ar@<0,5ex>[d]^{c} & & 
B_1 \ar@<-0,5ex>[d]_{d} \ar@<0,5ex>[d]^{c} \\
A_0 \ar@<0,5ex>[rr]^-{F_0} \ar@<-0,5ex>[rr]_-{G_0} \ar[rru]^<<<<<<<<{\alpha} & & B_0}$$
}\end{Text}

\begin{Text}\label{TextPastingLemma}{\rm
From \cite{JMMVFibr}, recall the following simple fact that holds in $\Grpd(\cA).$ If the left-hand part of the following diagram is a pullback and the right-hand part is a strong h-pullback, then the total diagram (filled with the 2-cell $\widehat{H}\cdot\varphi$)  \marginpar{$\lll$ }
is a strong h-pullback
$$\xymatrix{\bD \times_{H,F'} \bP \ar[r]^-{\widehat H} \ar[d]_{\widehat{F'}} & \bP \ar[rr]^{G'} \ar[d]_{F'} & & \bA \ar[d]^{F} \\
\bD \ar[r]_-{H} & \bC \ar@{}[rru]^{\varphi}|{\Rightarrow} \ar[rr]_{G} & & \bB}$$
}\end{Text}

\begin{Text}\label{TextStrongHpbGrpd}{\rm
In \cite{JMMVFibr}, the previous point is used to show that the 2-category $\Grpd(\cA)$ has strong h-pullbacks. Indeed, a strong h-pullback
$$\xymatrix{\bP \ar[r]^{G'} \ar[d]_{F'} & \bA \ar[d]^{F} \\
\bC \ar@{}[ru]^{\varphi}|{\Rightarrow} \ar[r]_{G} & \bB}$$
can be constructed in two steps. First, one constructs the strong h-pullback
$$\xymatrix{\vec{\bB} \ar[r]^{\gamma} \ar[d]_{\delta} & \bB \ar[d]^{\Id} \\
\bB \ar@{}[ru]^{\beta}|{\Rightarrow} \ar[r]_{\Id} & \bB}
\;\; \mbox{ where }\;\;
\xymatrix{\vec B_1 \ar[r]^-{m_2} \ar[d]_{m_1} & B_1 \times_{c,d}B_1 \ar[d]^{m} \\
B_1 \times_{c,d}B_1 \ar[r]_-{m} & B_1}$$
is a pullback in $\cA,$ and
$\vec\bB=(\xymatrix{\vec B_1 \times_{\vec c,\vec d}\vec B_1 \ar[r]^-{\vec m} & 
\vec B_1 \ar@<0.9ex>[r]^{\vec d} \ar@<-0.9ex>[r]_{\vec c} & B_1 \ar[l]|{\vec e}} ,
\xymatrix{\vec B_1 \ar[r]^{\vec i} & \vec B_1})$ is the groupoid of commutative squares in $\bB$ (see also \cite{DR}).
The functors $\delta \colon \vec\bB \to \bB$ and $\gamma \colon \vec\bB \to \bB$ are given by
$$\xymatrix{\vec B_1 \ar[rr]^-{\delta_1=m_2 \cdot \pi_1} \ar@<-0.5ex>[d]_{\vec d} \ar@<0.5ex>[d]^{\vec c} & & 
B_1 \ar@<-0.5ex>[d]_{d} \ar@<0.5ex>[d]^{c} \\
B_1 \ar[rr]_-{\delta_0=d} & & B_0} \;\;\;\;\;
\xymatrix{\vec B_1 \ar[rr]^-{\gamma_1=m_1 \cdot \pi_2} \ar@<-0.5ex>[d]_{\vec d} \ar@<0.5ex>[d]^{\vec c} & & 
B_1 \ar@<-0.5ex>[d]_{d} \ar@<0.5ex>[d]^{c} \\
B_1 \ar[rr]_-{\gamma_0=c} & & B_0}$$
and the natural transformation $\beta \colon \delta \Rightarrow \gamma$ is simply $\beta = \id_{B_1} \colon B_1 \to B_1.$

Then, the desired strong h-pullback is given by the following limit diagram in $\Grpd(\cA)$
$$\xymatrix{& & \bP \ar[lld]_{F'} \ar[d]^{\phi} \ar[rrd]^{G'} \\
\bC \ar[rd]_{G} & & \vec\bB \ar[ld]^{\delta} \ar[rd]_{\gamma} & & \bA \ar[ld]^{F} \\
& \bB & & \bB}$$
together with $\varphi = \phi \cdot \beta \colon F' \cdot G = \phi \cdot \delta \Rightarrow \phi \cdot \gamma = G' \cdot F.$
Notice that in a category with pullbacks, such a limit can be obtained by means of two pullbacks.

}\end{Text}

\begin{Text}\label{TextStrongHpbGrpdDescr}{\rm
Since finite limits in $\Grpd(\cA)$ are constructed level-wise, the strong h-pullback $\bP$ can be described more explicitely as
the following limit diagram in $\cA$
 $$\xymatrix{& & P_1 \ar[rd]|-{\varphi_1} \ar[rrrd]^-{G_1'} 
\ar@<-0.5ex>[ddd]_>>>>>>>>>>{\underline d} \ar@<0.5ex>[ddd]^>>>>>>>>>>{\underline c} \ar[lld]_-{F_1'} \\
C_1 \ar[rd]^{G_1} \ar@<-0.5ex>[ddd]_{d} \ar@<0.5ex>[ddd]^{c} & & & \vec B_1 \ar[lld]_>>>>>>>>>{m_2 \cdot \pi_1} 
\ar@<-0.5ex>[ddd]_{m_1 \cdot \pi_1} \ar@<0.5ex>[ddd]^{m_2 \cdot \pi_2} \ar[rd]^{m_1 \cdot \pi_2} 
& & A_1 \ar[ld]_<<<<<<{F_1} \ar@<-0.5ex>[ddd]_{d}  \ar@<0.5ex>[ddd]^{c} \\
& B_1 \ar@<-0.5ex>[ddd]_<<<<<<<<{d} \ar@<0.5ex>[ddd]^<<<<<<<<{c} 
& & & B_1 \ar@<-0.5ex>[ddd]_<<<<<<<<<<{d} \ar@<0.5ex>[ddd]^<<<<<<<<<<{c} \\
& & P_0 \ar[lld]_<<<<<<{F_0'} \ar[rd]_{\varphi_0} \ar[rrrd]|-{G_0'} \\
C_0 \ar[rd]_{G_0} & & & B_1 \ar[lld]^{d} \ar[rd]_{c} & & A_0 \ar[ld]^{F_0} \\
& B_0 & & & B_0}$$
}\end{Text}

\begin{Text}\label{TextStrongHkerGrpd}{\rm
In particular, if $\cA$ is pointed, the strong h-kernel
$$\xymatrix{\bK(F) \ar[rr]^{K(F)} \ar[d]_{0} & & \bA \ar[d]^{F} \\
[0]_0 \ar@{}[rru]^{k(F)_0}|{\Rightarrow} \ar[rr]_{0} & & \bB}$$
of a functor $F \colon \bA \to \bB$ exists in $\Grpd(\cA),$ and it can be explicitely described as the following limit diagram in $\cA$
$$\xymatrix{& & \bK(F)_1 \ar[rd]|-{k(F)_1} \ar[rrrd]^-{K(F)_1} 
\ar@<-0.5ex>[ddd]_>>>>>>>>>>{\underline d} \ar@<0.5ex>[ddd]^>>>>>>>>>>{\underline c} \ar[lld]_-{0} \\
0 \ar[rd]^{0} \ar@<-0.5ex>[ddd]_{0} \ar@<0.5ex>[ddd]^{0} & & & \vec B_1 \ar[lld]_>>>>>>>>>{m_2 \cdot \pi_1} 
\ar@<-0.5ex>[ddd]_{m_1 \cdot \pi_1} \ar@<0.5ex>[ddd]^{m_2 \cdot \pi_2} \ar[rd]^{m_1 \cdot \pi_2} 
& & A_1 \ar[ld]_<<<<<<{F_1} \ar@<-0.5ex>[ddd]_{d}  \ar@<0.5ex>[ddd]^{c} \\
& B_1 \ar@<-0.5ex>[ddd]_<<<<<<<<{d} \ar@<0.5ex>[ddd]^<<<<<<<<{c} 
& & & B_1 \ar@<-0.5ex>[ddd]_<<<<<<<<<<{d} \ar@<0.5ex>[ddd]^<<<<<<<<<<{c} \\
& & \bK(F)_0 \ar[lld]_<<<<<<{0} \ar[rd]_{k(F)_0} \ar[rrrd]|-{K(F)_0} \\
0 \ar[rd]_{0} & & & B_1 \ar[lld]^{d} \ar[rd]_{c} & & A_0 \ar[ld]^{F_0} \\
& B_0 & & & B_0}$$
}\end{Text}

\begin{Text}\label{TextPi0Pi1}\label{epsi}{\rm
Associated with a groupoid $\bB$, we can construct:
\begin{enumerate}
\item the object $\pi_0(\bB)$ of connected components, given by the coequalizer
$$\xymatrix{B_1 \ar@<-0,5ex>[r]_-{c} \ar@<0,5ex>[r]^-{d} & B_0 \ar[r]^-{\eta_{\bB}} & \pi_0(\bB)}$$
\item\label{lon} the object $\pi_1(\bB)$, which  is the joint kernel ($\text{Ker}(c)\cap \text{Ker}(d)$) of the domain and the codomain, given by the limit
$$\xymatrix{& & \pi_1(\bB) \ar[lld]_{0} \ar[d]^{\epsilon_{\bB}} \ar[rrd]^{0} \\
0 \ar[rd]_{0} & & B_1 \ar[ld]^{d} \ar[rd]_{c} & & 0 \ar[ld]^{0} \\
& B_0 & & B_0}$$
\end{enumerate}
These constructions are 2-functorial
$$\pi_0 \colon \Grpd(\cA) \to \cA \;,\;\; \pi_1 \colon \Grpd(\cA) \to \Grp(\cA)$$
where $\cA$ and $\Grp(\cA)$ (the category of internal groups in $\cA$) are seen as 2-categories with only identity 2-cells.
To see that $\pi_1(\bB)$ is indeed an internal group, just use its universal property to get multiplication and inverse
$$\xymatrix{\pi_1(\bB) \times \pi_1(\bB) \ar[r] \ar[d]_{\epsilon_{\bB} \times \epsilon_{\bB}}
& \pi_1(\bB) \ar[d]^{\epsilon_{\bB}} \\
B_1 \times_{c,d}B_1 \ar[r]_-{m} & B_1} \;\;\;\;\;
\xymatrix{\pi_1(\bB) \ar[r] \ar[d]_{\epsilon_{\bB}} & \pi_1(\bB) \ar[d]^{\epsilon_{\bB}} \\
B_1 \ar[r]_{i} & B_1}$$

Moreover, we have two pseudo-adjunctions
$$\pi_0 \dashv [ - ]_0 \;\mbox{ with }\; [X]_0 = X \rightrightarrows X, \;\mbox{ and }\;
[ - ]_1 \dashv \pi_1 \;\mbox{ with }\; [H]_1 = H \rightrightarrows 0$$

}\end{Text}

\begin{proof}
Here we check only that $\pi_0(F)=\pi_0(G)$ and $\pi_1(F)=\pi_1(G)$ if there exists a natural transformation 
$\alpha \colon F \Rightarrow G,$ and we leave the rest of the proof to the reader. Consider the diagram
$$\xymatrix{\pi_1(\bA) \ar@<0,5ex>[r]^-{\pi_1(F)} \ar@<-0,5ex>[r]_-{\pi_1(G)} \ar[d]_{\epsilon_{\bA}} & \pi_1(\bB) \ar[d]^{\epsilon_{\bB}} \\
A_1 \ar@<0,5ex>[r]^{F_1} \ar@<-0,5ex>[r]_-{G_1}  \ar@<-0,5ex>[d]_{d} \ar@<0,5ex>[d]^{c} & B_1 \ar@<-0,5ex>[d]_{d} \ar@<0,5ex>[d]^{c} \\
A_0 \ar@<0,5ex>[r]^{F_0} \ar@<-0,5ex>[r]_-{G_0} \ar[d]_{\eta_{\bA}} \ar[ru]|{\alpha} & B_0 \ar[d]^{\eta_{\bB}} \\
\pi_0(\bA) \ar@<0,5ex>[r]^-{\pi_0(F)} \ar@<-0,5ex>[r]_-{\pi_0(G)} & \pi_0(\bB)}$$
We have $\pi_0(F) = \pi_0(G)$ because $\eta_{\bA}$ is an epimorphism and
$$\eta_{\bA} \cdot \pi_0(F) = F_0 \cdot \eta_{\bB} = \alpha \cdot d \cdot \eta_{\bB} = 
\alpha \cdot c \cdot \eta_{\bB} = G_0 \cdot \eta_{\bB} = \eta_{\bA} \cdot \pi_0(G)$$
We have $\pi_1(F) = \pi_1(G)$ because $\epsilon_{\bB}$ is a monomorphism and
$$\pi_1(F) \cdot \epsilon_{\bB} = \epsilon_{\bA} \cdot F_1 = \epsilon_{\bA} \cdot \langle F_1, c \cdot \alpha \rangle \cdot \pi_1 =
\langle \epsilon_{\bA} \cdot F_1, \epsilon_{\bA} \cdot c \cdot \alpha \rangle \cdot \pi_1 = \langle \epsilon_{\bA} \cdot F_1, 0 \rangle \cdot \pi_1 = $$
$$= \langle \epsilon_{\bA} \cdot F_1, 0 \rangle \cdot m = \langle \epsilon_{\bA} \cdot F_1 , \epsilon_{\bA} \cdot c \cdot \alpha \rangle \cdot m = 
\epsilon_{\bA} \cdot \langle F_1 , c \cdot \alpha \rangle \cdot m = \epsilon_{\bA} \cdot \langle d \cdot \alpha , G_1 \rangle \cdot m = $$
$$= \langle \epsilon_{\bA} \cdot d \cdot \alpha , \epsilon_{\bA} \cdot G_1 \rangle \cdot m = \langle 0 , \epsilon_{\bA} \cdot G_1 \rangle \cdot m =
\langle 0 , \epsilon_{\bA} \cdot G_1 \rangle \cdot \pi_2 = \epsilon_{\bA} \cdot G_1 = \pi_1(G) \cdot \epsilon_{\bB}$$
\end{proof}

\begin{Text}\label{TextFFWeakEqui}{\rm
Following once again \cite{JMMVFibr}, we consider the strong h-pullbacks 
$$\xymatrix{\vec{\bA} \ar[r]^{\gamma} \ar[d]_{\delta} & \bA \ar[d]^{\Id} \\
\bA \ar@{}[ru]^{\alpha}|{\Rightarrow} \ar[r]_{\Id} & \bA}
\;\;\;\;\;\;\;\;\;\;\;\;
\xymatrix{\bR(F) \ar[rr]^{\gamma(F)} \ar[d]_{\delta(F)} & & \bA \ar[d]^{F} \\
\bA \ar@{}[rru]^{\alpha(F)}|{\Rightarrow} \ar[rr]_{F} & & \bB}$$
and the unique functor $\partial(F) \colon \vec\bA \to \bR(F)$ such that
 $\partial(F) \cdot \delta(F) = \delta$, $\partial(F) \cdot \gamma(F) = \gamma$ and
$\partial(F) \cdot \alpha(F) = \alpha \cdot F.$ The 0-level of the functor $\partial(F)$ is the unique arrow
making commutative the following diagram
$$\xymatrix{ & A_1 \ar[rr]^-{\partial(F)_0} \ar[ldd]_{d} \ar[rdd]_<<<<<<{F_1} \ar[rrrdd]_>>>>>>>>>>>>>>>>>>>>{c}
& & A_0 \times_{F_0,d} B_1 \times_{c,F_0} A_0 \ar[llldd]^>>>>>>>>>>>>>>{\delta(F)_0} \ar[ldd]^>>>>>>>>{\alpha(F)_0} \ar[rdd]^{\gamma(F)_0} \\ \\
A_0 \ar[rd]_{F_0} & & B_1 \ar[ld]^{d} \ar[rd]_{c} & & A_0 \ar[ld]^{F_0} \\
& B_0 & & B_0}$$
Therefore, we can complete the definitions given in \cite{BP} as follows.
}\end{Text}

\begin{Definition}\label{DefFFWeakEq}{\rm
A functor $F \colon \bA \to \bB$ in $\Grpd(\cA)$ is:
\begin{enumerate}
\item faithful if $\partial(F)_0$ is a monomorphism,
\item full if $\partial(F)_0$ is a regular epimorphism,
\item essentially surjective (surjective) if in one (equivalently, in both) of the following diagrams, where the squares are pullbacks, 
the first row is a regular epimorphism (a split epimorphism)
$$\xymatrix{A_0 \times_{F_0,d}B_1 \ar[r]^-{\beta_d} \ar[d]_{\alpha_d} & B_1 \ar[r]^-{c} \ar[d]^{d} & B_0 \\
A_0 \ar[r]_-{F_0} & B_0}
\;\;\;\;\;\;\;
\xymatrix{A_0 \times_{F_0,c}B_1 \ar[r]^-{\beta_c} \ar[d]_{\alpha_c} & B_1 \ar[r]^-{d} \ar[d]^{c} & B_0 \\
A_0 \ar[r]_-{F_0} & B_0}$$
\item a weak equivalence if it is full and faithful, and essentially surjective,
\item an equivalence if it is full and faithful, and surjective.
\end{enumerate}
}\end{Definition}

\begin{Text}\label{TextDefEquivFFunct}{\rm
It is well-known (see \cite{BP, EKVdL, FractEV}) that $F \colon \bA \to \bB$ is full and faithful (that is, $\partial(F)_0$ is an 
isomorphism) or an equivalence if and only if it is fully faithful or an equivalence in the 2-categorical sense (that is, the induced hom-functors 
$$- \cdot F \colon \Grpd(\cA)(\bX,\bA) \to \Grpd(\cA)(\bX,\bB)$$
are full and faithful or equivalences in the usual sense). We adapt hereunder the proof of Lemma 4.2 in \cite{FractEV} to show that in 
fact $F$ is faithful if and only if the functors $- \cdot F$ are faithful in the usual sense.
}\end{Text}

\begin{proof} 
Assume first that $F$ is faithful and consider two natural transformations	
$$\xymatrix{\bX \ar@/^1pc/[rr]^-{H} \ar@/_1pc/[rr]_-{K} \ar@{}[rr]|{\alpha \, \Downarrow \;\;\; \Downarrow \, \beta} & & \bA}$$
such that $\alpha \cdot F = \beta \cdot F,$ that is, such that $\alpha \cdot F_1 = \beta \cdot F_1.$ Since
$$\alpha \cdot \partial(F)_0 \cdot \delta(F)_0 = \alpha \cdot d = H_0 = \beta \cdot d = \beta \cdot \partial(F)_0 \cdot \delta(F)_0$$
$$\alpha \cdot \partial(F)_0 \cdot \gamma(F)_0 = \alpha \cdot c = K_0 = \beta \cdot c = \beta \cdot \partial(F)_0 \cdot \gamma(F)_0$$
$$\alpha \cdot \partial(F)_0 \cdot \alpha(F)_0 = \alpha \cdot F_1 = \beta \cdot F_1 = \beta \cdot \partial(F)_0 \cdot \alpha(F)_0$$
we have $\alpha \cdot \partial(F)_0 = \beta \cdot \partial(F)_0.$ Since $\partial(F)_0$ is a monomorphism, we have $\alpha = \beta.$ \\
Conversely, consider two arrows $\alpha, \beta \colon X_0 \rightrightarrows A_1$ such that $\alpha \cdot \partial(F)_0 = \beta \cdot \partial(F)_0.$
Since
$$\alpha \cdot d = \alpha \cdot \partial(F)_0 \cdot \delta(F)_0 = \beta \cdot \partial(F)_0 \cdot \delta(F)_0 = \beta \cdot d \;,\;\;
\alpha \cdot c = \alpha \cdot \partial(F)_0 \cdot \gamma(F)_0 = \beta \cdot \partial(F)_0 \cdot \gamma(F)_0 = \beta \cdot c$$
we can see $\alpha$ and $\beta$ as natural transformations as follows
$$\xymatrix{X_0 \ar@<-0,5ex>[dd]_{\id} \ar@<0,5ex>[dd]^{\id} 
\ar@<0,5ex>[rrr]^-{\alpha \cdot d \cdot e = \beta \cdot d \cdot e}
\ar@<-0,5ex>[rrr]_-{\alpha \cdot c \cdot e = \beta \cdot c \cdot e} 
& & & A_1 \ar@<-0,5ex>[dd]_{d} \ar@<0,5ex>[dd]^{c} \ar[r]^-{F_1}
& B_1 \ar@<-0,5ex>[dd]_{d} \ar@<0,5ex>[dd]^{c} \\ \\
X_0 \ar@<0,5ex>[rrruu]^{\alpha} \ar@<-0,5ex>[rrruu]_{\beta}
\ar@<0,5ex>[rrr]^-{\alpha \cdot d  = \beta \cdot d}
\ar@<-0,5ex>[rrr]_-{\alpha \cdot c = \beta \cdot c} 
& & & A_0 \ar[r]_-{F_0} & B_0}$$
Moreover, $\alpha \cdot F_1 = \alpha \cdot \partial(F)_0 \cdot \alpha(F)_0 = \beta \cdot \partial(F)_0 \cdot \alpha(F)_0 = \beta \cdot F_1.$ This means that
$\alpha \cdot F = \beta \cdot F$ as natural transformations. Since the hom-functor $- \cdot F$ is faithful, we conclude 
that $\alpha = \beta.$

\end{proof}

\section{The snail lemma for internal groupoids}

In this section, $\cA$ is a pointed regular category with reflexive coequalizers. 
Recall that the exactness in $B$ of
$$\xymatrix{A \ar[r]^-{f} & B \ar[r]^-{g} & C}$$
means that $(f,g)$ is a complex, that is, $f \cdot g = 0,$ and the factorization of $f$ through the kernel of $g$ is a regular epimorphism.

\begin{Text}\label{TextConnecting}{\rm
Starting from a functor $F \colon \bA \to \bB$ between groupoids in $\cA,$ we are going to construct an exact sequence
$$\xymatrix{\pi_1(\bK(F)) \ar[rr]^-{\pi_1(K(F))} & &  \pi_1(\bA) \ar[r]^{\pi_1(F)} & \pi_1(\bB) \ar[r]^-{D} 
& \pi_0(\bK(F)) \ar[rr]^-{\pi_0(K(F))} & & \pi_0(\bA) \ar[r]^{\pi_0(F)} & \pi_0(\bB)}$$
As far as the connecting morphism $D$ is concerned, let us observe that, since $\epsilon_{\bB} \cdot d = 0$ and 
$\epsilon_{\bB} \cdot c =0$ (where $\epsilon_{\bB}$ is as in \ref{epsi}.\ref{lon}) there exists a unique morphism $\Delta \colon \pi_1(\bB) \to \bK(F)_0$ such that 
$\Delta \cdot k(F)_0 = \epsilon_{\bB}$ and $\Delta \cdot K(F)_0 = 0.$ Therefore, we can define $D$ as follows:
$$D \colon \xymatrix{\pi_1(\bB) \ar[r]^{\Delta} & \bK(F)_0 \ar[r]^{\eta_{\bK(F)}} & \pi_0(\bK(F))}$$
}\end{Text}

\begin{Lemma}\label{LemmaDeltaMono}
(With the previous notation.) The diagram
$$\xymatrix{\pi_1(\bB) \ar[r]^-{\Delta} & \bK(F)_0 \ar[r]^-{K(F)_0} & A_0}$$
is a kernel diagram. 
\end{Lemma}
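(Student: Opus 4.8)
The claim is that $\pi_1(\bB) \xrightarrow{\Delta} \bK(F)_0 \xrightarrow{K(F)_0} A_0$ exhibits $\pi_1(\bB)$ as the kernel of $K(F)_0$. First I would note that $\Delta$ is a monomorphism: since $\Delta \cdot k(F)_0 = \epsilon_{\bB}$ and $\epsilon_{\bB}$ is a monomorphism (it is the $\pi_1$-inclusion, a joint kernel), $\Delta$ is split-monic on the left in the sense that precomposing a pair of arrows with $\Delta$ and then composing with $k(F)_0$ reflects equality. Next, $(\Delta, K(F)_0)$ is a complex by construction, as $\Delta \cdot K(F)_0 = 0$ was imposed when $\Delta$ was defined. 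So the real content is the universal property: given $T \colon X \to \bK(F)_0$ with $T \cdot K(F)_0 = 0$, I must produce a unique $u \colon X \to \pi_1(\bB)$ with $u \cdot \Delta = T$.

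\emph{Constructing the factorization.} The key is to unfold the explicit description of $\bK(F)_0$ from \ref{TextStrongHkerGrpd}: it is the limit of the lower cospan in that diagram, so $\bK(F)_0$ comes equipped with three legs, $0 \colon \bK(F)_0 \to 0$ (into the $C_0$-corner, which is trivial here), $k(F)_0 \colon \bK(F)_0 \to B_1$, and $K(F)_0 \colon \bK(F)_0 \to A_0$, subject to the compatibility $k(F)_0 \cdot d = 0$ (forced by the left triangle, since the $C_0 = 0$ leg is zero) and $k(F)_0 \cdot c = K(F)_0 \cdot F_0$. In fact $\bK(F)_0$ is precisely the pullback of $k(F)_0 \cdot c$ and $F_0$ restricted along $d = 0$; concretely it is the joint pullback $\{(b,a) \in B_1 \times A_0 : d(b) = 0,\ c(b) = F_0(a)\}$. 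Given $T$ with $T \cdot K(F)_0 = 0$, the $B_1$-component $T \cdot k(F)_0 \colon X \to B_1$ then satisfies $T \cdot k(F)_0 \cdot d = 0$ and $T \cdot k(F)_0 \cdot c = T \cdot K(F)_0 \cdot F_0 = 0$. By the universal property of $\pi_1(\bB)$ as the joint kernel $\Ker(d) \cap \Ker(c)$ (see \ref{epsi}.\ref{lon}), there is a unique $u \colon X \to \pi_1(\bB)$ with $u \cdot \epsilon_{\bB} = T \cdot k(F)_0$.

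\emph{Checking $u \cdot \Delta = T$ and uniqueness.} To see $u \cdot \Delta = T$, I would use the universal property of $\bK(F)_0$ as a limit: two arrows into $\bK(F)_0$ agree iff they agree after composing with the three legs $0$, $k(F)_0$, $K(F)_0$. The $C_0$-leg is zero on both sides trivially; on the $A_0$-leg, $u \cdot \Delta \cdot K(F)_0 = u \cdot 0 = 0 = T \cdot K(F)_0$; on the $B_1$-leg, $u \cdot \Delta \cdot k(F)_0 = u \cdot \epsilon_{\bB} = T \cdot k(F)_0$ by construction of $u$. Hence $u \cdot \Delta = T$. Uniqueness of $u$ is immediate: if $u' \cdot \Delta = T$ then $u' \cdot \epsilon_{\bB} = u' \cdot \Delta \cdot k(F)_0 = T \cdot k(F)_0 = u \cdot \epsilon_{\bB}$, and $\epsilon_{\bB}$ monic gives $u' = u$.

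\emph{Main obstacle.} The only delicate point is pinning down exactly which cospan $\bK(F)_0$ is the limit of, and in particular the fact that the condition cutting it out is "$d$-component is zero" rather than something involving the whole $\vec{\bB}$ groupoid; this requires reading off, from the big diagram in \ref{TextStrongHkerGrpd}, that the $C_0 = 0$ leg forces $k(F)_0 \cdot d = m_1 \cdot \pi_1 \cdot \text{(stuff)} = \dots = 0$. Once that identification is in hand, the proof is a routine diagram chase through the two universal properties, with no regularity or exactness hypotheses needed — this is a pure finite-limit statement.
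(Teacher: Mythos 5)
Your proof is correct and is exactly the ``easy exercise'' the paper leaves to the reader: the paper's proof only records that $\Delta$ is monic (via $\Delta \cdot k(F)_0 = \epsilon_{\bB}$) and omits the direct verification of the universal property, which you carry out precisely as intended, using the limit description of $\bK(F)_0$ and the joint-kernel property of $\pi_1(\bB)$. The only cosmetic remark is that calling $\Delta$ ``split-monic'' is a misnomer --- it is simply a monomorphism because a composite with it is one --- but this does not affect the argument.
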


\begin{proof}
Observe that $\Delta$ is a monomorphism because $\Delta \cdot k(F)_0 = \epsilon_{\bB}$ and 
$\epsilon_{\bB}$ is a monomorphism. Now the direct proof of the universal property is an easy exercise.
\end{proof}

\begin{Lemma}\label{LemmaCompl}
Let $F \colon \bA \to \bB$ be a functor between groupoids in $\cA,$ together with its strong h-kernel 
$K(F) \colon \bK(F) \to \bA.$ The sequence
$$\xymatrix{\pi_1(\bK(F)) \ar[rr]^-{\pi_1(K(F))} & &  \pi_1(\bA) \ar[r]^{\pi_1(F)} & \pi_1(\bB) \ar[r]^-{D} 
& \pi_0(\bK(F)) \ar[rr]^-{\pi_0(K(F))} & & \pi_0(\bA) \ar[r]^{\pi_0(F)} & \pi_0(\bB)}$$
is a complex.
\end{Lemma}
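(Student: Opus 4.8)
The statement amounts to checking that each of the four consecutive composites in the sequence vanishes, and the plan is to treat them in increasing order of difficulty: two fall out of $2$-functoriality, one is a one-line computation, and the composite $\pi_1(F)\cdot D$ is the only one requiring a genuine construction.

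First I would handle $\pi_1(K(F))\cdot\pi_1(F)$ and $\pi_0(K(F))\cdot\pi_0(F)$ simultaneously. The defining square of the strong h-kernel (see \ref{TextStrongHkerGrpd}) carries a natural transformation $k(F)\colon 0\Rightarrow K(F)\cdot F$ between functors $\bK(F)\to\bB$, whose source is the zero functor, since it factors through $[0]_0$. By the invariance statement established in \ref{TextPi0Pi1}, $\pi_i(0)=\pi_i(K(F)\cdot F)$ for $i=0,1$; by $2$-functoriality of $\pi_0$ and $\pi_1$ the right-hand side is $\pi_i(K(F))\cdot\pi_i(F)$, while the left-hand side is $0$ because $\pi_i([0]_0)=0$. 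Next, $D\cdot\pi_0(K(F))=0$ is immediate: writing $D=\Delta\cdot\eta_{\bK(F)}$ and using $\eta_{\bK(F)}\cdot\pi_0(K(F))=K(F)_0\cdot\eta_{\bA}$ together with $\Delta\cdot K(F)_0=0$ (the defining property of $\Delta$ recalled in \ref{TextConnecting}), one gets $D\cdot\pi_0(K(F))=\Delta\cdot K(F)_0\cdot\eta_{\bA}=0$.

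The remaining composite is $\pi_1(F)\cdot D=\pi_1(F)\cdot\Delta\cdot\eta_{\bK(F)}$. Since $\eta_{\bK(F)}$ is the coequalizer of $\underline d,\underline c\colon\bK(F)_1\rightrightarrows\bK(F)_0$, it suffices to exhibit a morphism $h\colon\pi_1(\bA)\to\bK(F)_1$ with $h\cdot\underline c=\pi_1(F)\cdot\Delta$ and $h\cdot\underline d=0$; then $\pi_1(F)\cdot\Delta\cdot\eta_{\bK(F)}=h\cdot\underline c\cdot\eta_{\bK(F)}=h\cdot\underline d\cdot\eta_{\bK(F)}=0$. To build $h$ I would use the explicit description of $\bK(F)_1$ from \ref{TextStrongHkerGrpd} as a limit in $\cA$ with components $\bK(F)_1\to 0$, $K(F)_1\colon\bK(F)_1\to A_1$ and $k(F)_1\colon\bK(F)_1\to\vec B_1$ (with $\vec B_1$ the object of commutative squares in $\bB$, see \ref{TextStrongHpbGrpd}), and define $h$ by declaring its $A_1$-component to be $\epsilon_{\bA}\colon\pi_1(\bA)\to A_1$ and its $\vec B_1$-component to be $\epsilon_{\bA}\cdot F_1\cdot\sigma$, where $\sigma\colon B_1\to\vec B_1$ sends a generalized arrow $b\colon x\to y$ to the degenerate commutative square whose left and top edges are $e(d b)=\id_x$ and whose bottom and right edges are $b$. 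Conceptually, this is the square witnessing that an automorphism $a\colon 0\to 0$ of the base point of $\bA$ becomes a morphism of $\bK(F)$ from the canonical object $(0,\id_0)$ to $(0,F_1(a))=\Delta(\pi_1(F)(a))$. I would then verify the two cone conditions defining $\bK(F)_1$ (this uses $\epsilon_{\bA}\cdot d=\epsilon_{\bA}\cdot c=0$ and the functoriality identity $F_1\cdot d=d\cdot F_0$), and finally compute $h\cdot\underline d$ and $h\cdot\underline c$ leg by leg into $\bK(F)_0$: the former comes out with both legs zero, hence equals the zero morphism, while the latter has legs $(\epsilon_{\bA}\cdot F_1,\,0)$, which are precisely the legs of $\pi_1(F)\cdot\Delta$ as read off from $\pi_1(F)\cdot\Delta\cdot k(F)_0=\pi_1(F)\cdot\epsilon_{\bB}=\epsilon_{\bA}\cdot F_1$ and $\pi_1(F)\cdot\Delta\cdot K(F)_0=0$.

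The main obstacle is exactly this last construction. It is routine bookkeeping rather than a conceptual difficulty, but it requires being careful with the limit presentation of $\bK(F)_1$ in \ref{TextStrongHkerGrpd} — in particular with which projections $m_i\cdot\pi_j$ of $\vec B_1$ compute $\underline d$ and $\underline c$ and which edge is forced to be trivial by the leg $\bK(F)_1\to 0$ — so that the degenerate square cut out by $\sigma$ is the correct one and the leg-by-leg identification goes through. Everything else is formal.
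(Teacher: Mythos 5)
Your proposal is correct and follows essentially the same route as the paper: the two outer composites via the $2$-cell $k(F)_0\colon 0\Rightarrow K(F)\cdot F$ and the invariance of $\pi_0,\pi_1$ under natural transformations, the one-line computation $D\cdot\pi_0(K(F))=\Delta\cdot K(F)_0\cdot\eta_{\bA}=0$, and for $\pi_1(F)\cdot D$ a lift $h\colon\pi_1(\bA)\to\bK(F)_1$ with $h\cdot\underline d=0$ and $h\cdot\underline c=\pi_1(F)\cdot\Delta$, killed by the coequalizer $\eta_{\bK(F)}$. Your $h$ is exactly the paper's $\lambda$: the degenerate-square component $\epsilon_{\bA}\cdot F_1\cdot\sigma$ coincides with the paper's $\vec i_F=\langle\langle 0,\epsilon_{\bA}\cdot F_1\rangle,\langle 0,\epsilon_{\bA}\cdot F_1\rangle\rangle$, so the verification leg by leg is the same.
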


\begin{proof}

\hfill

$\bullet$ The composite $\pi_1(K(F)) \cdot \pi_1(F)$ is trivial:
since there is a natural transformation $k(F)_0 \colon 0 \Rightarrow K(F) \cdot F,$ by 
\ref{TextPi0Pi1} we get $0 = \pi_1(0) = \pi_1(K(F) \cdot F) = \pi_1(K(F)) \cdot \pi_1(F).$

$\bullet$ The composite $\pi_1(F) \cdot D$ is trivial. Let us consider
$$i_F = \langle 0, \epsilon_{\bA} \cdot F_1 \rangle \colon \pi_1(\bA) \to B_1 \times_{c,d} B_1
\;\mbox{ and }\;
\vec i_F = \langle i_F, i_F \rangle \colon \pi_1(\bA) \to \vec B_1$$
Since $\vec i_F \cdot m_2 \cdot \pi_1 = i_F \cdot \pi_1 = 0$ and
$\vec i_F \cdot m_1 \cdot \pi_2 = i_F \cdot \pi_2 = \epsilon_{\bA} \cdot F_1,$
there exists a unique $\lambda \colon \pi_1(\bA) \to \bK(F)_1$ such that
$\lambda \cdot k(F)_1 = \vec i_F$ and $\lambda \cdot K(F)_1 = \epsilon_{\bA}.$
Composing with the limit projections, we check now that $\lambda \cdot \underline d = 0 \colon$
$$\lambda \cdot \underline d \cdot k(F)_0 = \lambda \cdot k(F)_1 \cdot m_1 \cdot \pi_1 =
\vec i_F \cdot m_1 \cdot \pi_1 = i_F \cdot \pi_1 = 0$$
$$\lambda \cdot \underline d \cdot K(F)_0 = \lambda \cdot K(F)_1 \cdot d = \epsilon_{\bA} \cdot d = 0$$
Similarly, we check that $\lambda \cdot \underline c = \pi_1(F) \cdot \Delta \colon$
$$\lambda \cdot \underline c \cdot k(F)_0 = \lambda \cdot k(F)_1 \cdot m_2 \cdot \pi_2 =
\vec i_F \cdot m_2 \cdot \pi_2 = i_F \cdot \pi_2 = \epsilon_{\bA} \cdot F_1 = \pi_1(F) \cdot \epsilon_{\bB} =
\pi_1(F) \cdot \Delta \cdot k(F)_0$$
$$\lambda \cdot \underline c \cdot K(F)_0 = \lambda \cdot K(F)_1 \cdot c = \epsilon_{\bA} \cdot c =
0 = \pi_1(F) \cdot 0 = \pi_1(F) \cdot \Delta \cdot K(F)_0$$
Finally : $\pi_1(F) \cdot D = \pi_1(F) \cdot \Delta \cdot \eta_{\bK(F)} =
\lambda \cdot \underline c \cdot \eta_{\bK(F)} = \lambda \cdot \underline d \cdot \eta_{\bK(F)} =
0 \cdot \eta_{\bK(F)} = 0.$ 

$\bullet$ The composite $D \cdot \pi_0(K(F))$ is trivial. This is a direct calculation :
$$D \cdot \pi_0(K(F)) = \Delta \cdot \eta_{\bK(F)} \cdot \pi_0(K(F)) =
\Delta \cdot K(F)_0 \cdot \eta_{\bA} = 0 \cdot \eta_{\bA} = 0$$

$\bullet$ The composite $\pi_0(K(F)) \cdot \pi_0(F)$ is trivial. 
Since there is a natural transformation $k(F)_0 \colon 0 \Rightarrow K(F) \cdot F,$ by 
\ref{TextPi0Pi1} we get $0 = \pi_0(0) = \pi_0(K(F) \cdot F) = \pi_0(K(F)) \cdot \pi_0(F).$
\end{proof}

The following definition is the version for groupoids of Definition 2.2 in \cite{SnailEV}, see also Section 5 in \cite{DB2}.

\begin{Definition}\label{DefProperGroupoid}{\rm
A groupoid $\bB$ is proper if the factorization $\beta$ of the pair $(d,c)$ through the kernel pair
of $\eta_{\bB}$ is a regular epimorphism
$$\xymatrix{B_1 \ar@<-0,5ex>[rr]_-{c} \ar@<0,5ex>[rr]^-{d} \ar[rd]_{\beta} & & 
B_0 \ar[r]^{\eta_{\bB}} & \pi_0{\bB} \\
& R[\eta_{\bB}] \ar@<-0,5ex>[ru]_{r_c} \ar@<0,5ex>[ru]^{r_d}}$$
}\end{Definition}

\begin{Text}\label{TextRestrRegEpi}{\rm
Consider the diagram
$$\xymatrix{B_1 \ar@<-0,5ex>[d]_{d} \ar@<0,5ex>[d]^{c} & & \Ker(d) \ar[ll]_-{k_d} \ar[d]^{c'} \\
B_0 \ar[d]_{\eta_{\bB}} & & \Ker(\eta_{\bB}) \ar[ll]^-{k_{\eta_{\bB}}} \\
\pi_0(\bB)}$$
where $c'$ is the unique arrow such that $k_d \cdot c = c' \cdot k_{\eta_{\bB}}$
(such an arrow exists because $k_d \cdot c \cdot \eta_{\bB} = k_d \cdot d \cdot \eta_{\bB} =
0 \cdot \eta_{\bB} = 0$). Then the diagram
$$\xymatrix{B_1 \ar[d]_{\beta} & & \Ker(d) \ar[ll]_-{k_d} \ar[d]^{c'} \\
R[\eta_{\bB}] & & \Ker(\eta_{\bB}) \ar[ll]^-{\langle 0, k_{\eta_{\bB}} \rangle}}$$
where $\langle 0, k_{\eta_{\bB}} \rangle \cdot r_d = 0$ and 
$\langle 0, k_{\eta_{\bB}} \rangle \cdot r_c =  k_{\eta_{\bB}},$ is a pullback.
The proof is straightforward using that the pair $(r_d,r_c)$ is monomorphic.
Therefore, $c'$ is a regular epimorphism whenever the groupoid $\bB$ is proper. \\
In the above argument, the role of $d$ and $c$ can be inverted: $d'$ is the unique arrow 
such that $k_c \cdot d = d' \cdot k_{\eta_{\bB}},$ and the diagram on the right is a pullback
$$\xymatrix{B_1 \ar@<-0,5ex>[d]_{d} \ar@<0,5ex>[d]^{c} & & \Ker(c) \ar[ll]_-{k_c} \ar[d]^{d'} \\
B_0 & & \Ker(\eta_{\bB}) \ar[ll]^-{k_{\eta_{\bB}}}}
\;\;\;\;\;
\xymatrix{B_1 \ar[d]_{\beta} & & \Ker(c) \ar[ll]_-{k_c} \ar[d]^{d'} \\
R[\eta_{\bB}] & & \Ker(\eta_{\bB}) \ar[ll]^-{\langle k_{\eta_{\bB}}, 0 \rangle}}$$
and again we get that $d'$ is a regular epimorphism whenever the groupoid $\bB$ is proper.
}\end{Text}

\begin{Proposition}\label{PropNonLinSnail}
Let $F \colon \bA \to \bB$ be a functor between groupoids in $\cA,$ together with its strong h-kernel 
$K(F) \colon \bK(F) \to \bA.$ If $\bA, \bB$ and $\bK(F)$ are proper, then the sequence
$$\xymatrix{\pi_1(\bK(F)) \ar[rr]^-{\pi_1(K(F))} & &  \pi_1(\bA) \ar[r]^{\pi_1(F)} & \pi_1(\bB) \ar[r]^-{D} 
& \pi_0(\bK(F)) \ar[rr]^-{\pi_0(K(F))} & & \pi_0(\bA) \ar[r]^{\pi_0(F)} & \pi_0(\bB)}$$
is exact.
\end{Proposition}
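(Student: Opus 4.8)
The plan is to complement Lemma \ref{LemmaCompl} (which already gives that the sequence is a complex) by checking, at each of the four objects $\pi_1(\bA)$, $\pi_1(\bB)$, $\pi_0(\bK(F))$, $\pi_0(\bA)$, that the canonical comparison into the kernel of the next arrow is a regular epimorphism. The uniform tool will be \ref{TextRestrRegEpi}: for a proper groupoid $\bG$, the restrictions $c'\colon\Ker(d)\to\Ker(\eta_{\bG})$ and $d'\colon\Ker(c)\to\Ker(\eta_{\bG})$ are regular epimorphisms. This is combined with the explicit level-wise description of $\bK(F)$ from \ref{TextStrongHkerGrpd}, the fact that each $\eta$ and each comparison $\beta$ (Definition \ref{DefProperGroupoid}) is a regular epimorphism, the stability of regular epimorphisms under pullback in the regular category $\cA$, and the cancellation property: if $f\cdot g$ is a regular epimorphism then so is $g$.

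Two of the four verifications are light. Exactness at $\pi_1(\bA)$ needs no properness: by \ref{TextStrongHkerGrpd} an endo-arrow of the zero object of $\bK(F)$ is exactly an arrow $\alpha$ of $\bA$ with $d\alpha=c\alpha=0$ and $F_1\alpha=0$, so $\pi_1(K(F))$ together with $\pi_1(F)$ is a kernel diagram (in the spirit of Lemma \ref{LemmaDeltaMono}) and the comparison is an isomorphism. Exactness at $\pi_0(\bA)$ uses only properness of $\bB$: writing $N=\Ker(\pi_0(F))$, pulling $N$ back along the regular epimorphism $\eta_{\bA}$ gives $M=\{a\mid\eta_{\bB}F_0a=0\}$ with a regular epimorphism $M\to N$; from $\bK(F)_0\cong A_0\times_{B_0}\Ker(d)$ and $M\cong A_0\times_{B_0}\Ker(\eta_{\bB})$ one recognizes $\bK(F)_0\to M$ as the pullback of $c'$ (for $\bB$) along $F_0$, hence a regular epimorphism; composing with $M\to N$ and cancelling the regular epimorphism $\eta_{\bK(F)}$ shows $\pi_0(\bK(F))\to N$ is a regular epimorphism.

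The remaining two spots, around the connecting morphism $D=\Delta\cdot\eta_{\bK(F)}$, are the substance of the proof. For $\pi_1(\bB)$, the monomorphism $\Delta$ restricts to $\Ker(D)\to\Ker(\eta_{\bK(F)})$; pulling the regular epimorphism $c'\colon\Ker(d)\to\Ker(\eta_{\bK(F)})$ of $\bK(F)$ (here properness of $\bK(F)$ enters) back along this restriction yields an object which, via the description of arrows of $\bK(F)$ out of the zero object, is identified with $\pi_1(\bA)$, the projection onto $\Ker(D)$ being exactly the comparison induced by $\pi_1(F)$; so that comparison is a regular epimorphism. For $\pi_0(\bK(F))$, put $Q=\Ker(\pi_0(K(F)))$ and let $\bar D\colon\pi_1(\bB)\to Q$ be the comparison. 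Pulling $Q$ back along $\eta_{\bK(F)}$ gives $P=\{(a,b)\in\bK(F)_0\mid\eta_{\bA}a=0\}$ with a regular epimorphism $P\to Q$, so by cancellation it suffices that $\pi_1(\bB)\times_Q P\to P$ be a regular epimorphism. Now $\pi_1(\bB)\times_Q P$ embeds into the kernel pair $R[\eta_{\bK(F)}]$; pulling $\beta$ for $\bK(F)$, i.e. $\beta_{\bK(F)}\colon\bK(F)_1\to R[\eta_{\bK(F)}]$, back along this embedding produces $S$ with a regular epimorphism $S\to\pi_1(\bB)\times_Q P$, and, using the explicit form of the arrows $\Delta z\to(a,b)$ of $\bK(F)$ and the invertibility of the underlying arrows in $\bA$, one identifies $S$ so that $S\to P$ becomes the pullback of $c'$ for $\bA$ (properness of $\bA$ is used here), hence a regular epimorphism; two applications of cancellation then give that $\bar D$ is a regular epimorphism. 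I expect the main obstacle to lie exactly in this last pair of arguments: setting up the iterated pullbacks and matching them, through the level-wise description of $\bK(F)$, with $\pi_1(\bA)$ and with a pullback of $c'$, so that the maps produced by the construction are literally the canonical comparisons into the kernels in question; once these identifications are secured, the proof reduces to the stability and cancellation properties of regular epimorphisms in $\cA$.
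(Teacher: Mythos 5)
Your proposal is correct and follows essentially the same route as the paper: complex property from Lemma \ref{LemmaCompl}, then at each object a regular-epi comparison obtained from properness via \ref{TextRestrRegEpi} together with pullback stability and cancellation of regular epimorphisms; in particular your pullback of $\underline c'\colon \Ker(\underline d)\to\Ker(\eta_{\bK(F)})$ along the restriction of $\Delta$ at $\pi_1(\bB)$, and your treatment of $\pi_0(\bA)$ via $A_0\times_{B_0}\Ker(\eta_{\bB})$, are literally the paper's arguments, while your object $S$ at $\pi_0(\bK(F))$ is, after the identification you indicate, the paper's pullback $E$ with its map $\Lambda$ to $\pi_1(\bB)$. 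The only cosmetic differences are that the paper settles $\pi_1(\bA)$ by the pseudo-adjunction $[-]_1\dashv\pi_1$ rather than by your direct kernel identification, and at $\pi_0(\bK(F))$ it constructs $\Lambda$ and a connecting arrow $\overline S\colon E\to\bK(F)_1$ explicitly instead of pulling back $\kappa\colon\bK(F)_1\to R[\eta_{\bK(F)}]$, so your second use of properness of $\bK(F)$ there is unnecessary (though harmless).
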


\begin{proof}

\hfill

$\bullet$ Exactness in $\pi_1(\bA).$ This follows from the pseudo-adjunction $[-]_1 \dashv \pi_1,$ 
see \ref{TextPi0Pi1}. 

$\bullet$ Exactness in $\pi_1(\bB).$ We have to prove that the factorization $\sigma$ of $\pi_1(F)$ 
through the kernel of $D = \Delta \cdot \eta_{\bK(F)}$ is a regular epimorphism.
$$\xymatrix{\Ker(D) \ar[r]^-{k_D} & \pi_1(\bB) \ar[r]^-{\Delta} & 
\bK(F)_0 \ar[r]^-{\eta_{\bK(F)}} & \pi_0(\bK(F)) \\
& \pi_1(\bA) \ar[u]_{\pi_1(F)} \ar[lu]^{\sigma}}$$
Consider the factorization
$$\xymatrix{\bK(F)_1 \ar@<-0,5ex>[rr]_-{\underline c} \ar@<0,5ex>[rr]^-{\underline d} \ar[rd]_{\kappa} 
& & \bK(F)_0 \ar[r]^{\eta_{\bK(F)}} & \pi_0{\bK(F)} \\
& R[\eta_{\bK(F)}] \ar@<-0,5ex>[ru]_{r_{\underline c}} \ar@<0,5ex>[ru]^{r_{\underline d}}}$$
and the unique arrow $\underline c'$ such that the diagram
$$\xymatrix{\bK(F)_1 \ar[d]_{\underline c} & \Ker(\underline d) 
\ar[l]_-{k_{\underline d}} \ar[d]^{\underline c'} \\
\bK(F)_0 & \Ker(\eta_{\bK(F)}) \ar[l]^-{k_{\eta_{\bK(F)}}}}$$
commutes. Following \ref{TextRestrRegEpi}, the diagram
$$\xymatrix{\bK(F)_1 \ar[d]_{\kappa} & & \Ker(\underline d) \ar[ll]_-{k_{\underline d}} 
\ar[d]^{\underline c'} \\
R[\eta_{\bK(F)}] & & \Ker(\eta_{\bK(F)}) \ar[ll]^-{\langle 0, k_{\eta_{\bK(F)}} \rangle}}$$
is a pullback, so that $\underline c'$ is a regular epimorphism because $\kappa$ is a regular 
epimorphism ($\bK(F)$ is proper). We are going to construct a diagram
$$\xymatrix{\pi_1(\bA) \ar[d]_{\sigma} \ar[r]^-{\lambda'} & \Ker(\underline d) \ar[d]^{\underline c'} \\
\Ker(D) \ar[r]_-{\Delta'} & \Ker(\eta_{\bK(F)})}$$
and prove that it is a pullback, which implies that $\sigma$ is a regular epimorphism.
In order to construct $\Delta',$ observe that $k_D \cdot \Delta \cdot \eta_{\bK(F)} = k_D \cdot D = 0,$ 
so that there exists a unique arrow $\Delta' \colon \Ker(D) \to \Ker(\eta_{\bK(F)})$ such that
$\Delta' \cdot k_{\eta_{\bK(F)}} = k_D \cdot \Delta.$ Moreover, $\Delta'$ is a monomorphism
because $\Delta$ is a monomorphism (see Lemma \ref{LemmaDeltaMono}).
Consider the arrow $\lambda \colon \pi_1(\bA) \to \bK(F)_1$ constructed in the proof of 
Lemma \ref{LemmaCompl} in order to prove that $\pi_1(F) \cdot D = 0.$
We already know that $\lambda \cdot \underline d = 0,$ so that there exists a unique arrow
$\lambda' \colon \pi_1(\bA) \to \Ker(\underline d)$ such that $\lambda' \cdot k_{\underline d} = \lambda.$
To check the commutativity of the above diagram, compose with the monomorphism 
$k_{\eta_{\bK(F)}}$ and recall that $\lambda \cdot \underline c = \pi_1(F) \cdot \Delta \colon$
$$\lambda' \cdot \underline c' \cdot k_{\eta_{\bK(F)}} = 
\lambda' \cdot \underline c' \cdot \langle 0, k_{\eta_{\bK(F)}} \rangle \cdot r_{\underline c} =
\lambda' \cdot k_{\underline d} \cdot \kappa \cdot r_{\underline c} =
\lambda \cdot \underline c = \pi_1(F) \cdot \Delta = \sigma \cdot k_D \cdot \Delta =
\sigma \cdot \Delta' \cdot k_{\eta_{\bK(F)}}$$
As far as the universality of the above diagram is concerned, consider two arrows
$$x \colon Z \to \Ker(D) \;\mbox{ and }\; y \colon Z \to \Ker(\underline d)$$ 
such that $x \cdot \Delta' = y \cdot \underline c'.$ In order to construct the factorization of 
$x$ and $y$ through $\sigma$ and $\lambda',$ we use the universal property of $\pi_1(\bA).$ 
Since
$$y \cdot k_{\underline d} \cdot K(F)_1 \cdot d = y \cdot k_{\underline d} \cdot \underline d \cdot K(F)_0 
= y \cdot 0 \cdot K(F)_0 = 0$$
$$y \cdot k_{\underline d} \cdot K(F)_1 \cdot c = y \cdot k_{\underline d} \cdot \underline c \cdot K(F)_0
= y \cdot k_{\underline d} \cdot \kappa \cdot r_{\underline c} \cdot K(F)_0 =
y \cdot \underline c' \cdot \langle 0, k_{\eta_{\bK(F)}} \rangle \cdot r_{\underline c} \cdot K(F)_0 =$$
$$= x \cdot \Delta' \cdot k_{\eta_{\bK(F)}} \cdot K(F)_0 =
x \cdot k_D \cdot \Delta \cdot K(F)_0 = x \cdot k_D \cdot 0 = 0$$
there exists a unique arrow $z \colon Z \to \pi_1(\bA)$ such that 
$z \cdot \epsilon_{\bA} = y \cdot k_{\underline d} \cdot K(F)_1.$
To check that $z \cdot \lambda' = y,$ compose with $k_{\underline d} \cdot K(F)_1,$ 
which is a monomorphism (this will be proved in Lemma \ref{LemmaKerFibrDiscr}):
$$y \cdot k_{\underline d} \cdot K(F)_1 = z \cdot \epsilon_{\bA} = z \cdot \lambda \cdot K(F)_1 =
z \cdot \lambda' \cdot k_{\underline d} \cdot K(F)_1$$
To check that $z \cdot \sigma = x,$ compose with the monomorphism $\Delta' \colon$
$$z \cdot \sigma \cdot \Delta' = z \cdot \lambda' \cdot \underline c' = y \cdot \underline c' = x \cdot \Delta'$$
Finally, such a factorization $z$ is necessarily unique. Indeed, $\lambda$ is a monomorphism 
(because $\epsilon_{\bA}$ is a monomorphism and $\lambda \cdot K(F)_1 = \epsilon_{\bA}$) and 
therefore $\lambda'$ also is a monomorphism because $\lambda' \cdot k_{\underline d} = \lambda.$ 

$\bullet$ Exactness in $\pi_0(\bK(F)).$  We have to prove that the factorization $\sigma$ of 
$D=\Delta \cdot \eta_{\bK(F)}$ through the kernel of $\pi_0(K(F))$ is a regular epimorphism.
$$\xymatrix{\Ker(\pi_0(K(F))) \ar[rr]^-{k_{\pi_0(K(F))}} & & \pi_0(\bK(F)) \ar[rr]^-{\pi_0(K(F))} & & \pi_0(\bA) \\
& & \pi_1(\bB) \ar[u]_{D} \ar[llu]^{\sigma}}$$
We are going to use the following diagram
$$\xymatrix{A_1 \ar[rdd]_{d} & \Ker(c) \ar[l]_-{k_c} \ar[d]^{d'} & & E \ar[ll]_-{f_0'} \ar[d]_{d''} \ar[r]^-{\Lambda}
& \pi_1(\bB) \ar@{-->}[ldd]^{\Delta} \ar[ddd]^{\sigma} \\
& \Ker(\eta_{\bA}) \ar[d]^{k_{\eta_{\bA}}} & & \Ker(K(F)_0 \cdot \eta_{\bA}) \ar[ll]_-{f_0} 
\ar[d]_{k_{K(F)_0 \cdot \eta_{\bA}}} \ar[rdd]_{\Sigma} \\
& A_0 \ar[d]_{\eta_{\bA}} & & \bK(F)_0 \ar[ll]_-{K(F)_0} \ar[d]_{\eta_{\bK(F)}} \\
& \pi_0(\bA) & & \pi_0(\bK(F)) \ar[ll]^-{\pi_0(K(F))} & \Ker(\pi_0(K(F))) \ar[l]^-{k_{\pi_0(K(F))}}}$$
where $d'$ is as in \ref{TextRestrRegEpi}, $f_0$ is the unique arrow such that 
$f_0 \cdot k_{\eta_{\bA}} = k_{K(F)_0 \cdot \eta_{\bA}} \cdot K(F)_0,$ 
$\Sigma$ is the unique arrow such that 
$\Sigma \cdot k_{\pi_0(K(F))} = k_{K(F)_0 \cdot \eta_{\bA}} \cdot \eta_{\bK(F)},$
$E$ is the pullback of $f_0$ and $d',$ and $\Lambda$ is to be constructed. The arrow $\Delta$ is dashed
because $\Lambda \cdot \Delta \neq d'' \cdot \Sigma$ (all the rest of the diagram is commutative).
If we can construct an arrow $\Lambda$ in such a way that $\Lambda \cdot \sigma = d'' \cdot \Sigma,$
then in order  to prove that $\sigma$ is a regular epimorphism it suffices to observe that $d''$ is a regular epimorphism
(it is the pullback of $d'$ which is a regular epimorphism since $\bA$ is proper), and $\Sigma$ 
also is a regular epimorphism. For this last fact, an easy inspection of the following diagram shows that 
the left-hand square is a pullback
$$\xymatrix{\Ker(K(F)_0 \cdot \eta_{\bA}) \ar[d]_{\Sigma} \ar[rr]^-{k_{K(F)_0 \cdot \eta_{\bA}}} & & 
\bK(F)_0 \ar[r]^-{K(F)_0} \ar[d]_{\eta_{\bK(F)}} & A_0 \ar[r]^-{\eta_{\bA}} & \pi_0(\bA) \ar[d]^{\id} \\
\Ker(\pi_0(K(F))) \ar[rr]_-{k_{\pi_0(K(F))}} & & \pi_0(\bK(F)) \ar[rr]_-{\pi_0(K(F))} & & \pi_0(\bA)}$$
In order to construct $\Lambda,$ observe that
$$d'' \cdot k_{K(F)_0 \cdot \eta_{\bA}} \cdot k(F)_0 \cdot c = d'' \cdot k_{K(F)_0 \cdot \eta_{\bA}} \cdot K(F)_0 \cdot F_0 =
d'' \cdot f_0 \cdot k_{\eta_{\bA}} \cdot F_0 =$$
$$=  f_0' \cdot d' \cdot k_{\eta_{\bA}} \cdot F_0 
= f_0' \cdot k_c \cdot d \cdot F_0 = f_0' \cdot k_c \cdot F_1 \cdot d$$
so that there exists a unique arrow $\tau \colon E \to B_1 \times_{c,d}B_1$ such that
$\tau \cdot \pi_1 = d'' \cdot k_{K(F)_0 \cdot \eta_{\bA}} \cdot k(F)_0$ and $\tau \cdot \pi_2 = f_0' \cdot k_c \cdot F_1.$
Moreover, since
$$\tau \cdot m \cdot d = \tau \cdot \pi_1 \cdot d = d'' \cdot k_{K(F)_0 \cdot \eta_{\bA}} \cdot k(F)_0 \cdot d = 
d'' \cdot k_{K(F)_0 \cdot \eta_{\bA}} \cdot 0 = 0$$
$$\tau \cdot m \cdot c = \tau \cdot \pi_2 \cdot c = f_0' \cdot k_c \cdot F_1 \cdot c = f_0' \cdot k_c \cdot c \cdot F_0 = f_0' \cdot 0 \cdot F_0 = 0$$
there exists a unique arrow $\Lambda \colon E \to \pi_1(\bB)$ such that
$$\xymatrix{E \ar[r]^-{\Lambda} \ar[d]_{\tau} & \pi_1(\bB) \ar[d]^{\epsilon_{\bB}} \\
B_1 \times_{c,d}B_1 \ar[r]_-{m} & B_1}$$
commutes.
It remains to check the equation
$$d'' \cdot \Sigma = \Lambda \cdot \sigma$$
Composing with the monomorphism $k_{\pi_0(K(F))},$ this is equivalent to checking the equation
$$d'' \cdot k_{K(F)_0 \cdot \eta_{\bA}} \cdot \eta_{\bK(F)} = \Lambda \cdot \Delta \cdot \eta_{\bK(F)}$$
and, for doing this, we construct a factorization of the pair $(d'' \cdot k_{K(F)_0 \cdot \eta_{\bA}}, \Lambda \cdot \Delta)$ 
through the pair $(\underline d, \underline c).$ This is done in  three steps. First, since we already know that 
$\tau \cdot m \cdot d =0,$ we can consider the factorization $\langle 0, \tau \cdot m \rangle \colon E \to B_1 \times_{c,d}B_1.$
Second, since the zero-arrow $0 \colon E \to B_1$ can be decomposed as
$$\xymatrix{E \ar[r]^-{f_0'} & \Ker(c) \ar[r]^-{k_c} & A_1 \ar[r]^-{c} & A_0 \ar[r]^-{F_0} & B_0 \ar[r]^-{e} & B_1}$$
there exists a unique arrow $S \colon E \to \vec B_1$ such that $S \cdot m_1 = \tau$ and 
$S \cdot m_2 = \langle 0, \tau \cdot m \rangle.$ Third, since
$$S \cdot m_2 \cdot \pi_1 = \langle 0, \tau \cdot m \rangle \cdot \pi_1 = 0 
\;\mbox{ and }\;
S \cdot m_1 \cdot \pi_2 = \tau \cdot \pi_2 = f_0' \cdot k_c \cdot F_1$$
there exists a unique arrow $\overline S \colon E \to \bK(F)_1$ such that $\overline S \cdot k(F)_1 = S$ and 
$\overline S \cdot K(F)_1 = f_0' \cdot k_c.$ Now, composing with the limit projections, we check the commutativity of
$$\xymatrix{\bK(F)_1 \ar[r]^-{\underline d} & \bK(F)_0 \\
E \ar[u]^{\overline S} \ar[ru]_{d'' \cdot k_{K(F)_0 \cdot \eta_{\bA}}}}
\;\;\;\;\;\;\;\;\;\;
\xymatrix{\bK(F)_1 \ar[r]^-{\underline c} & \bK(F)_0 \\
E \ar[u]^{\overline S} \ar[ru]_{\Lambda \cdot \Delta}}$$
$$\overline S \cdot \underline d \cdot k(F)_0 = \overline S \cdot k(F)_1 \cdot m_1 \cdot \pi_1 = S \cdot m_1 \cdot \pi_1 = 
\tau \cdot \pi_1 = d'' \cdot k_{K(F) \cdot \eta_{\bA}} \cdot k(F)_0$$
$$\overline S \cdot \underline d \cdot K(F)_0 = \overline S \cdot K(F)_1 \cdot d = f_0' \cdot k_c \cdot d = 
f_0' \cdot d' \cdot k_{\eta_{\bA}} = d'' \cdot f_0 \cdot k_{\eta_{\bA}} = d'' \cdot k_{K(F)_0 \cdot \eta_{\bA}} \cdot K(F)_0$$
$$\overline S \cdot \underline c \cdot k(F)_0 = \overline S \cdot k(F)_1 \cdot m_2 \cdot \pi_2 = S \cdot m_2 \cdot \pi_2
= \tau \cdot m = \Lambda \cdot \epsilon_{\bB} = \Lambda \cdot \Delta \cdot k(F)_0$$
$$\overline S \cdot \underline c \cdot K(F)_0 = \overline S \cdot K(F)_1 \cdot c = f_0' \cdot k_c \cdot c = f_0' \cdot 0 = 0 = 
\Lambda \cdot 0 = \Lambda \cdot \Delta \cdot K(F)_0$$
Finally,
$$d'' \cdot k_{K(F)_0 \cdot \eta_{\bA}} \cdot \eta_{\bK(F)} = \overline S \cdot \underline d \cdot \eta_{\bK(F)} =
\overline S \cdot \underline c \cdot \eta_{\bK(F)} = \Lambda \cdot \Delta \cdot \eta_{\bK(F)}$$

$\bullet$ Exactness in $\pi_0(\bA).$ We have to prove that the factorization $\sigma$ of $\pi_0(K(F))$ through the kernel
of $\pi_0(F)$ is a regular epimorphism.
$$\xymatrix{\Ker(\pi_0(F)) \ar[r]^-{k_{\pi_0(F)}} & \pi_0(\bA) \ar[r]^-{\pi_0(F)} & \pi_0(\bB) \\
& \pi_0(\bK(F)) \ar[lu]^{\sigma} \ar[u]_{\pi_0(K(F))}}$$
Since $k(F)_0 \cdot d = 0,$ there exists a unique arrow $\tau \colon \bK(F)_0 \to \Ker(d)$ such that $\tau \cdot k_d = k(F)_0.$
Consider now the pullback
$$\xymatrix{T \ar[r]^-{F_0'} \ar[d]_{k'} & \Ker(\eta_{\bB}) \ar[d]^{k_{\eta_{\bB}}} \\
A_0 \ar[r]_-{F_0} & B_0}$$
and the arrow $c' \colon \Ker(d) \to \Ker(\eta_{\bB})$ as in \ref{TextRestrRegEpi}. Since
$$\tau \cdot c' \cdot k_{\eta_{\bB}} = \tau \cdot k_d \cdot c = k(F)_0 \cdot c = K(F)_0 \cdot F_0$$
there exists a unique arrow $\tau' \colon \bK(F)_0 \to T$ such that $\tau' \cdot k' = K(F)_0$ and 
$\tau' \cdot F_0' = \tau \cdot c'.$ Moreover,
$$k' \cdot \eta_{\bA} \cdot \pi_0(F) = k' \cdot F_0 \cdot \eta_{\bB} = 
F_0' \cdot k_{\eta_{\bB}} \cdot \eta_{\bB} = F_0' \cdot 0 = 0$$
so that there exists a unique arrow $\gamma \colon T \to \Ker(\pi_0(F))$ such that 
$\gamma \cdot k_{\pi_0(F)} = k' \cdot \eta_{\bA}.$ We get the following diagram
$$\xymatrix{\bK(F)_0 \ar[r]^-{\tau'} \ar[d]_{\eta_{\bK(F)}} & T \ar[d]^{\gamma} \\
\pi_0(\bK(F)) \ar[r]_-{\sigma} & \Ker(\pi_0(F))}$$
and we check that it commutes by composing with the monomorphism $k_{\pi_0(f)} \colon$
$$\tau' \cdot \gamma \cdot k_{\pi_0(F)} = \tau' \cdot k' \cdot \eta_{\bA} = K(F)_0 \cdot \eta_{\bA} =
\eta_{\bK(F)} \cdot \pi_0(K(F)) = \eta_{\bK(F)} \cdot \sigma \cdot k_{\pi_0(F)}$$
To conclude that $\sigma$ is a regular epimorphism, it remains to prove that $\tau'$ and $\gamma$
are regular epimorphisms. As far as $\gamma$ is concerned, consider the diagrams
$$\xymatrix{T \ar[r]^-{F_0'} \ar[d]_{k'} & \Ker(\eta_{\bB}) \ar[d]^{k_{\eta_{\bB}}} \ar[r]^-{!} & 0 \ar[d]^{!} \\
A_0 \ar[r]_-{F_0} \ar@{}[ru]|-{(1)} & B_0 \ar[r]_-{\eta_{\bB}} \ar@{}[ru]|-{(2)} & \pi_0(\bB)}
\;\;\;\;\;
\xymatrix{T \ar[d]_{k'} \ar[r]^-{\gamma} & \Ker(\pi_0(F)) \ar[d]^{k_{\pi_0(F)}} \ar[r]^-{!} & 0 \ar[d]^{!} \\
A_0 \ar[r]_-{\eta_{\bA}} \ar@{}[ru]|-{(3)} & \pi_0(\bA) \ar[r]_-{\pi_0(F)} \ar@{}[ru]|-{(4)} & \pi_0(\bB)}$$
Since (1) and (2) are pullbacks, so is (1)+(2), that is, $k'$ is a kernel of 
$F_0 \cdot \eta_{\bB} = \eta_{\bA} \cdot \pi_0(F).$ This means that (3)+(4) is a pullback and,
since (4) also is a pullback, we have that (3) is a pullback. This implies that $\gamma$ is a regular
epimorphism because $\eta_{\bA}$ is a regular epimorphism.
As far as $\tau'$ is concerned, consider the diagram
$$\xymatrix{\bK(F)_0 \ar[r]^-{\tau'} \ar[d]_{\tau} & T \ar[r]^-{k'} \ar[d]_{F_0'} & A_0 \ar[d]^{F_0} \\
\Ker(d) \ar[r]_-{c'} \ar@{}[ru]|-{(5)} & \Ker(\eta_{\bB}) \ar[r]_-{k_{\eta_{\bB}}} \ar@{}[ru]|-{(1)} & B_0}$$

Since, $c'\cdot k_{\eta_{\bB}}= k_d\cdot c$ and $\tau'\cdot k'=K(F)_0$, then 
(5)+(1) is a pullback.

%Let us check that (5)+(1) is a pullback: consider two arrows $$x \colon Z \to \Ker(d) \;\mbox{ and }\; y \colon Z \to A_0$$ such that $x \cdot c' \cdot k_{\eta_{\bB}} = y \cdot F_0.$ Since $x \cdot k_d \cdot d = 0$ and $x \cdot k_d \cdot c = x \cdot c' \cdot k_{\eta_{\bB}} = y \cdot F_0,$ the universal property of $\bK(F)_0$ gives a unique arrow $z \colon Z \to \bK(F)_0$ such that $z \cdot k(F)_0 = x \cdot k_d$ and $z \cdot K(F)_0 = y.$ The rest of the argument is straightforward  using that $\tau' \cdot k' = K(F)_0$ and that $k_d$ is a monomorphism. Now we know that (5)+(1) is a pullback and, s
Since (1) also is a pullback, we deduce that (5) is a pullback.
Therefore, $\tau'$ is a regular epimorphism because $c'$ is a regular epimorphism 
(see \ref{TextRestrRegEpi}).
\end{proof}

\begin{Lemma}\label{LemmaKerFibrDiscr}
Let $F \colon \bA \to \bB$ be a functor between groupoids in $\cA,$ together with its strong h-kernel 
$K(F) \colon \bK(F) \to \bA.$ In the commutative diagram
$$\xymatrix{\Ker(\underline d) \ar[r]^-{k_{\underline d}} \ar[d]_{\K_d(K(F))} &
\bK(F)_1 \ar[r]^-{\underline d} \ar[d]^{K(F)_1} & \bK(F)_0 \ar[d]^{K(F)_0} \\
\Ker(d) \ar[r]_-{k_d} & A_1 \ar[r]_-{d} & A_0}$$
the square on the right is a pullback. As a consequence, the arrow $\K_d(K(F))$ is an isomorphism.
\end{Lemma}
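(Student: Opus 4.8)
The plan is to prove that the right-hand square is a pullback in $\cA$ by checking its universal property directly from the explicit limit descriptions of $\bK(F)_1$ and $\bK(F)_0$ in \ref{TextStrongHkerGrpd}; the claim about $\K_d(K(F))$ then follows by pullback pasting. The intuition is that a morphism of the strong h-kernel is a morphism $f$ of $\bA$ together with a morphism $0 \to F(\underline d\, f)$ of $\bB$ at its domain, the morphism $0 \to F(\underline c\, f)$ at the codomain being forced to be the composite of the two; and this is exactly the datum that $A_1 \times_{d,K(F)_0} \bK(F)_0$ records.

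From \ref{TextStrongHkerGrpd} one reads off that $\bK(F)_1$ is a limit with jointly monomorphic projections $k(F)_1 \colon \bK(F)_1 \to \vec B_1$ and $K(F)_1 \colon \bK(F)_1 \to A_1$ subject to $k(F)_1 \cdot \delta_1 = 0$ and $k(F)_1 \cdot \gamma_1 = K(F)_1 \cdot F_1$ (where $\delta_1 = m_2 \cdot \pi_1$ and $\gamma_1 = m_1 \cdot \pi_2$, see \ref{TextStrongHpbGrpd}), that $\underline d$ is characterised by $\underline d \cdot k(F)_0 = k(F)_1 \cdot m_1 \cdot \pi_1$ and $\underline d \cdot K(F)_0 = K(F)_1 \cdot d$, and that $\bK(F)_0$ is a limit with jointly monomorphic projections $k(F)_0, K(F)_0$ satisfying $k(F)_0 \cdot d = 0$ and $k(F)_0 \cdot c = K(F)_0 \cdot F_0$. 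Given a test object $X$ with maps $x \colon X \to A_1$ and $y \colon X \to \bK(F)_0$ such that $x \cdot d = y \cdot K(F)_0$, I would build the factorization $z \colon X \to \bK(F)_1$ by first producing a map $w \colon X \to \vec B_1$, which I picture as the commutative square of $\bB$ with left edge $y \cdot k(F)_0$, bottom edge $x \cdot F_1$, top edge the zero morphism, and right edge the composite $\langle y \cdot k(F)_0, x \cdot F_1 \rangle \cdot m$ (the pair is composable since $y \cdot k(F)_0 \cdot c = y \cdot K(F)_0 \cdot F_0 = x \cdot d \cdot F_0 = x \cdot F_1 \cdot d$). Spelling $\vec B_1$ out as the double pullback of $m$ along $m$, $w$ is the map whose $m_1$-component is $\langle y \cdot k(F)_0, x \cdot F_1 \rangle$ and whose $m_2$-component is $\langle 0, \langle y \cdot k(F)_0, x \cdot F_1 \rangle \cdot m \rangle$; the equality of their images under $m$ is just the left unit law of $\bB$ (one rewrites the zero edge as $\langle y \cdot k(F)_0, x \cdot F_1 \rangle \cdot m \cdot d \cdot e$, using $y \cdot k(F)_0 \cdot d = 0$). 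Then $w \cdot \delta_1 = 0$ and $w \cdot \gamma_1 = x \cdot F_1$, so the universal property of $\bK(F)_1$ yields $z$ with $z \cdot k(F)_1 = w$ and $z \cdot K(F)_1 = x$; composing with the jointly monomorphic pair $k(F)_0, K(F)_0$ one checks $z \cdot \underline d = y$.

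For uniqueness, if $z'$ also satisfies $z' \cdot K(F)_1 = x$ and $z' \cdot \underline d = y$, then the square $z' \cdot k(F)_1$ has left edge $z' \cdot \underline d \cdot k(F)_0 = y \cdot k(F)_0$, bottom edge $z' \cdot K(F)_1 \cdot F_1 = x \cdot F_1$ and top edge $z' \cdot k(F)_1 \cdot \delta_1 = 0$, and its right edge is then forced by the commutativity of the square together with the unit law; hence $z' \cdot k(F)_1 = w$, and since $k(F)_1, K(F)_1$ are jointly monomorphic, $z' = z$. This establishes that the right-hand square is a pullback. For the final assertion, note that this pullback exhibits $\underline d$ as a pullback of $d$ along $K(F)_0$; pulling $\underline d$ back further along $0 \to \bK(F)_0$ produces $\Ker(\underline d)$, and by pullback pasting this coincides with the pullback of $d$ along the composite $0 \to \bK(F)_0 \to A_0$ — which is the zero morphism $0 \to A_0$ — that is, with $\Ker(d)$, the identification sending $k_{\underline d} \cdot K(F)_1$ to $k_d$. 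The comparison realizing this identification is precisely $\K_d(K(F))$, since $\K_d(K(F)) \cdot k_d = k_{\underline d} \cdot K(F)_1$; hence $\K_d(K(F))$ is an isomorphism.

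The delicate step is the construction of $w$: one must unravel $\vec B_1$ as the double pullback of $m$ along $m$, pick out the two composable-pair components correctly, and recognise that their coincidence under $m$ is exactly the unit law — all while keeping the pointed-category bookkeeping straight, since the zero morphism $X \to B_1$ and the identity at the base object agree here only after routing through $e$. The rest is a formal juggling of limit projections.
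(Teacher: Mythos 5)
Your proposal is correct and follows essentially the same route as the paper: the paper proves the square is a pullback by constructing an explicit inverse to the comparison $\tau_d\colon \bK(F)_1 \to \bK(F)_0\times_{K(F)_0,d}A_1$, and the square in $\vec B_1$ it builds (composable pair $\langle \alpha_d\cdot k(F)_0,\beta_d\cdot F_1\rangle$ paired with $\langle 0,-\cdot m\rangle$, using the unit law to see the two $m$-images agree) is exactly your $w$, with your test-object verification of the universal property and your uniqueness step matching the paper's check that $\tau_d$ and $t$ are mutually inverse. Your explicit pasting argument for the consequence about $\K_d(K(F))$ is a correct spelling-out of what the paper leaves implicit.
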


Using the terminology of Definition \ref{DefFibration}, this lemma means that $K(F) \colon \bK(F) \to \bA$
is a discrete fibration.

\begin{proof} We have to prove that the canonical factorization $\tau_d$ in the diagram 
$$\xymatrix{\bK(F)_1 \ar[rr]^{K(F)_1} \ar[rd]^{\tau_d}  \ar[dd]_{\underline d} & & A_1 \ar[dd]^{d} \\
& \bK(F)_0 \times_{K(F)_0,d}A_1 \ar[ru]^{\beta_d} \ar[ld]_{\alpha_d} \\
\bK(F)_0 \ar[rr]_{K(F)_0} & & A_0}$$
is an isomorphism. In order to construct an inverse for $\tau_d,$ observe that
$$\alpha_d \cdot k(F)_0 \cdot c = \alpha_d \cdot K(F)_0 \cdot F_0 = \beta_d \cdot d \cdot F_0 = \beta_d \cdot F_1 \cdot d$$
Therefore, there exists a unique arrow $x \colon \bK(F)_0 \times_{K(F)_0,d} A_1 \to B_1 \times_{c,d} B_1$ such that 
$x \cdot \pi_1 = \alpha_d \cdot k(F)_0$ and $x \cdot \pi_2 = \beta_d \cdot F_1.$ Moreover,
$$x \cdot m \cdot d = x \cdot \pi_1 \cdot d = \alpha_d \cdot k(F)_0 \cdot d = \alpha_d \cdot 0 = 0 = 0 \cdot c$$
so that there exists a unique arrow $y \colon \bK(F)_0 \times_{K(F)_0,d}A_1 \to B_1 \times_{c,d}B_1$ such that 
$y \cdot \pi_1 = 0$ and $y \cdot \pi_2 = x \cdot m.$ Now, since $y \cdot \pi_1 = 0,$ we have $y \cdot m = y \cdot \pi_2$
and then $y \cdot m = x \cdot m.$ Therefore, there exists a unique arrow $z \colon \bK(F)_0 \times_{K(F)_0,d}A_1 \to \vec B_1$
such that $z \cdot m_1 = x$ and $z \cdot m_2 = y.$ Finally, since
$$z \cdot m_2 \cdot \pi_1 = y \cdot \pi_1 = 0 \;\mbox{ and }\; z \cdot m_1 \cdot \pi_2 = x \cdot \pi_2 = \beta_d \cdot F_1$$
there exists a unique arrow $t \colon \bK(F)_0 \times_{K(F)_0,d}A_1 \to \bK(F)_1$ such that 
$t \cdot k(F)_1 = z$ and $t \cdot K(F)_1 = \beta_d.$ \\
It remains to prove that $\tau_d$ and $t$ realize an isomorphism, which can be done by composing with the various limit 
projections. The only non straightforward condition to check is the following one:
$$\tau_d \cdot t \cdot k(F)_1 \cdot m_2 \cdot \pi_2  =  \tau_d \cdot z \cdot m_2 \cdot \pi_2 = \tau_d \cdot y \cdot \pi_2 =$$
$$= \tau_d \cdot x \cdot m = k(F)_1 \cdot m_1 \cdot m = k(F)_1 \cdot m_2 \cdot m = k(F)_1 \cdot m_2 \cdot \pi_2$$ 
where in the fourth equality  $\tau_d \cdot x \cdot = k(F)_1 \cdot m_1 $ since $\tau_d \cdot x \cdot \pi_1 = k(F)_1 \cdot m_1 \cdot \pi_1$ and 
$\tau_d \cdot x \cdot \pi_2 = k(F)_1 \cdot m_1 \cdot \pi_2,$ and the last equality comes from $k(F)_1 \cdot m_2 \cdot \pi_1 = 0.$
\end{proof}

\begin{Text}\label{TextProperGroupoid}{\rm
G. Janelidze pointed out to us that the condition to be proper 
is always satisfied by an internal groupoid if the base category $\cA$ is exact, but not if $\cA$ is just regular.
Here is the argument when $\cA$ is exact: start with a groupoid $\bB$ and consider the
 (regular epi, jointly monic)-factorization of $d,c \colon B_1 \rightrightarrows B_0$
$$\xymatrix{& \underline B \ar@<-0,5ex>[rd]_{\underline c} \ar@<0,5ex>[rd]^{\underline d} \\
B_1 \ar[ru]^{\underline \beta} \ar@<-0,5ex>[rr]_-{c} \ar@<0,5ex>[rr]^-{d} & & 
B_0 \ar[r]^{\eta_{\bB}} & \pi_0{\bB}}$$
Since $\cA$ is regular and $\bB$ is a groupoid, the pair 
$\underline d, \underline c \colon \underline B \rightrightarrows B_0$ is an equivalence
relation. Moreover, since $\underline \beta$ is a regular epi, the coequalizer of 
$(\underline d, \underline c)$ is $\eta_{\bB}.$ Therefore, if $\cA$ is exact,
$\underline d, \underline c \colon \underline B \rightrightarrows B_0$ is the 
kernel pair of $\eta_{\bB}$ and we have done.
}\end{Text}

\section{The snake lemma for internal groupoids}

In this section $\cA$ is a pointed regular category with reflexive coequalizers. 

Let us recall the definition of fibration, split epi fibration and discrete fibration for internal groupoids 
(the name ``split epi fibration'' is not standard, see \cite{JMMVFibr}).

\begin{Definition}\label{DefFibration}{\rm
Consider a functor $F \colon \bA \to \bB$ between groupoids in $\cA,$ and the induced factorizations through the 
pullbacks as in the following diagrams
$$\xymatrix{A_1 \ar[rr]^{F_1} \ar[rd]^{\tau_d}  \ar[dd]_{d} & & B_1 \ar[dd]^{d} \\
& A_0 \times_{F_0,d}B_1 \ar[ru]^{\beta_d} \ar[ld]_{\alpha_d} \\
A_0 \ar[rr]_{F_0} & & B_0}
\;\;\;\;\;\;\;
\xymatrix{A_1 \ar[rr]^{F_1} \ar[rd]^{\tau_c}  \ar[dd]_{c} & & B_1 \ar[dd]^{c} \\
& A_0 \times_{F_0,c}B_1 \ar[ru]^{\beta_c} \ar[ld]_{\alpha_c} \\
A_0 \ar[rr]_{F_0} & & B_0}$$
\begin{enumerate}
\item $F$ is a fibration when $\tau_d$ (equivalently, $\tau_c$) is a regular epimorphism.
\item $F$ is a split epi fibration when $\tau_d$ (equivalently, $\tau_c$) is a split epimorphism.
\item $F$ is a discrete fibration when $\tau_d$ (equivalently, $\tau_c$) is an isomorphism.
\end{enumerate}
}\end{Definition}

%\begin{Remark}\label{RemDefFibr}{\rm
%The fact that in Definition \ref{DefFibration} one can equivalently use $\tau_d$ or $\tau_c$ is easy to justify:
%consider the following commutative diagram
%$$\xymatrix{A_1 \ar[d]_{i} \ar[r]^-{\tau_d} & A_0 \times_{F_0,d}B_1 \ar[d]^{\id \times i} \\
%A_1 \ar[r]_-{\tau_c} & A_0 \times_{F_0,c}B_1}$$
%where $\id \times i$ is the unique morphism such that $(\id \times i) \cdot \beta_c = \beta_d \cdot i$ and 
%$(\id \times i) \cdot \alpha_c = \alpha_d,$ and use that $i$ and $\id \times i$ are isomorphisms.
%}\end{Remark}

\begin{Text}\label{TextWhyFibr}{\rm
Having in mind the snail and the snake lemma in protomodular categories (see \cite{DB2} or \cite{SnailEV}),
the fact that fibrations enter in the picture is not a surprise. Here is why: given a functor $F \colon \bA \to \bB,$ 
consider the induced arrow $\K_d(F)$ as in the following diagram
$$\xymatrix{\Ker(d) \ar[r]^-{k_d} \ar[d]_{\K_d(F)} & A_1 \ar[r]^{d} \ar[d]_{F_1} & A_0 \ar[d]^{F_0} \\
\Ker(d) \ar[r]_-{k_d} & B_1 \ar[r]_{d} & B_0}$$
Then the commutative diagram
$$\xymatrix{\Ker(d) \ar[r]^{\K_d(F)} \ar[d]_{k_d \cdot c} & \Ker(d) \ar[d]^{k_d \cdot c} \\
A_0 \ar[r]_{F_0} & B_0}$$
is the normalization of $F \colon \bA \to \bB$ and, if $\cA$ is protomodular, it can be taken as starting 
point to construct the snail or the snake sequence as in \cite{SnailEV} (the snail sequence if we have 
no conditions on $\K_d(F),$ the snake sequence if $\K_d(F)$ is a regular epimorphism). Moreover, in
\cite{EKVdL} the following facts have been proved (see also \cite{JMMVFibr}):
\begin{enumerate}
\item If $F$ is a fibration, then the induced arrow $\K_d(F)$ is a regular epimorphism.
\item If the category $\cA$ is protomodular and if $\K_d(F)$ is a regular epimorphism, 
then $F$ is a fibration.
\end{enumerate}
}\end{Text}

\begin{Text}\label{TextIntroPropFibrCompKer}{\rm
In the next proposition, proved in \cite{JMMVFibr}, we compare the strong h-kernel $\bK(F)$
with the kernel $\bKer(F)$ of a functor $F.$ The latter is just the componentwise kernel in $\cA \colon$
$$\xymatrix{\Ker(F_1) \ar@<-0,5ex>[d]_{\underline d} \ar@<0,5ex>[d]^{\underline c} \ar[r]^-{k_{F_1}} &
A_1 \ar@<-0,5ex>[d]_{d} \ar@<0,5ex>[d]^{c} \ar[r]^-{F_1} & B_1 \ar@<-0,5ex>[d]_{d} \ar@<0,5ex>[d]^{c} \\
\Ker(F_0) \ar[r]_-{k_{F_0}} & A_0 \ar[r]_-{F_0} & B_0}$$
The universal property of the strong h-kernel induces a comparison $J$ as in the diagram
$$\xymatrix{\bK(F) \ar[r]^-{K(F)} & \bA \ar[r]^-{F} & \bB \\
\bKer(F) \ar[u]^{J} \ar[ru]_{K_F} }$$
}\end{Text}

\begin{Proposition}\label{PropFibrCompKer}
Consider a functor $F \colon \bA \to \bB$ between groupoids in $\cA$ together with the comparison $J \colon \bKer(F) \to \bK(F).$
\begin{enumerate}
\item If $F$ is a fibration, then $J$ is a weak equivalence.
\item If $F$ is a split epi fibration, then $J$ is an equivalence.
\end{enumerate}
\end{Proposition}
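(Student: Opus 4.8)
The plan is to prove first that $J$ is full and faithful \emph{with no hypothesis on $F$}, and then to deduce the two statements by checking that $J$ is essentially surjective (resp.\ surjective), in the sense of Definition \ref{DefFFWeakEq}, as soon as $F$ is a fibration (resp.\ a split epi fibration).

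For full-and-faithfulness I would argue $2$-categorically and appeal to \ref{TextDefEquivFFunct}: it suffices to show that $-\cdot J\colon\Grpd(\cA)(\bX,\bKer(F))\to\Grpd(\cA)(\bX,\bK(F))$ is full and faithful for every groupoid $\bX$. By the universal property of the strict kernel $\bKer(F)=\bA\times_{\bB}0$, a $2$-cell between two functors $\bX\to\bKer(F)$ amounts to a $2$-cell $\mu$ between the underlying functors $\bX\to\bA$ with $\mu\cdot F_1=0$; by the universal property of the strong h-kernel $\bK(F)$, together with the fact that $J$ is characterised by $J\cdot K(F)=K_F$ and $J\cdot k(F)_0=\id$ (note that $K_F\cdot F$ is literally the zero functor, since $\bKer(F)$ is the strict kernel, so the witnessing $2$-cell is the identity), a $2$-cell between two functors of the form $G\cdot J$ amounts to a $2$-cell $\beta$ between the underlying functors $\bX\to\bA$ with $\beta\cdot F_1=0$. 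Under these two descriptions $-\cdot J$ is the identity, hence full and faithful. (Alternatively one checks directly that $\partial(J)_0$ is an isomorphism, using Lemma \ref{LemmaKerFibrDiscr} to replace $\bK(F)_1$ by $\bK(F)_0\times_{K(F)_0,d}A_1$ and building the inverse from the kernel property of $\bKer(F)_1=\Ker(F_1)$.)

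By Definition \ref{DefFFWeakEq} it remains to analyse the first row of one of the pullback diagrams attached to $J$, that is, the composite
$$\bKer(F)_0\times_{J_0,\underline d}\bK(F)_1\longrightarrow\bK(F)_1\xrightarrow{\ \underline c\ }\bK(F)_0 .$$
Here I would use the isomorphism $\bK(F)_1\cong\bK(F)_0\times_{K(F)_0,d}A_1$ of Lemma \ref{LemmaKerFibrDiscr}, under which $\underline d$ becomes the first projection, together with $J_0\cdot K(F)_0=k_{F_0}$: the source of the composite gets identified with $\Ker(F_0)\times_{k_{F_0},d}A_1$, and, reading off $\underline c$ from the limit description of $\bK(F)$ in \ref{TextStrongHkerGrpd}, the composite itself gets identified with the pullback of the canonical arrow $\tau_c\colon A_1\to A_0\times_{F_0,c}B_1$ of Definition \ref{DefFibration} along the monomorphism $\bK(F)_0\hookrightarrow A_0\times_{F_0,c}B_1$ induced by $k_d\colon\Ker(d)\hookrightarrow B_1$. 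Since regular epimorphisms and split epimorphisms are stable under pullback, the hypothesis that $\tau_c$ is a regular epimorphism (resp.\ a split epimorphism) --- i.e.\ that $F$ is a fibration (resp.\ a split epi fibration) --- forces this first row to be one as well, so $J$ is essentially surjective (resp.\ surjective) and therefore, with the first step, a weak equivalence (resp.\ an equivalence).

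The only genuine work is in the third paragraph: one must unwind the explicit limit presentation of $\bK(F)$ and recognise, through the isomorphism of Lemma \ref{LemmaKerFibrDiscr}, the source-restriction of $\underline c$ as a pullback of $\tau_c$ (symmetrically, of $\tau_d$ if one uses the other diagram of Definition \ref{DefFFWeakEq}); once that square is written down, pullback-stability finishes the argument. The first step is essentially formal bookkeeping with universal properties.
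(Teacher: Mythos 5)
Your proposal is correct, but there is nothing in this paper to compare it against: as announced in \ref{TextIntroPropFibrCompKer}, Proposition \ref{PropFibrCompKer} is imported from the companion paper \cite{JMMVFibr} and no proof is given here, so you have in effect supplied a self-contained argument from the material of this paper, which is valuable. Both halves check out. First, $J$ is indeed always full and faithful: since $J\cdot K(F)=K_F$ and the whiskering of the 2-cell $k(F)_0$ with $J$ is the identity 2-cell of the zero functor, the 2-dimensional universal properties identify both the 2-cells between functors $\bX\to\bKer(F)$ and the 2-cells between functors of the form $G\cdot J$ with 2-cells into $\bA$ whose whiskering with $F$ is trivial, and $-\cdot J$ becomes the identity under these identifications, so \ref{TextDefEquivFFunct} applies (the direct computation of $\partial(J)_0$ works just as well). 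Second, the key square really is a pullback: Lemma \ref{LemmaKerFibrDiscr} gives $\bKer(F)_0\times_{J_0,\underline d}\bK(F)_1\cong\Ker(F_0)\times_{k_{F_0},d}A_1$, the restricted $\underline c$ followed by the monomorphism $\iota\colon\bK(F)_0\cong A_0\times_{F_0,\,k_d\cdot c}\Ker(d)\to A_0\times_{F_0,c}B_1$ equals the projection to $A_1$ followed by $\tau_c$, and for the universal property a cone $(u\colon X\to A_1,\ v\colon X\to\bK(F)_0)$ with $u\cdot\tau_c=v\cdot\iota$ satisfies $u\cdot d\cdot F_0=u\cdot F_1\cdot d=v\cdot k(F)_0\cdot d=0$, so $u\cdot d$ factors through $k_{F_0}$ and the required factorization exists and is unique; pullback-stability of regular (resp.\ split) epimorphisms in the regular category $\cA$ then gives essential surjectivity (resp.\ surjectivity) of $J$, hence the two statements via Definition \ref{DefFFWeakEq}. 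The one step you leave implicit --- that the $k(F)_0$-component of the restricted $\underline c$ is $F_1$ of the $A_1$-component --- needs the explicit inverse constructed in the proof of Lemma \ref{LemmaKerFibrDiscr} together with the unit law $\langle 0,h\rangle\cdot m=h$ already exploited elsewhere in the paper; it is routine, but should be written out in a final version.
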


We need a partial generalization of Proposition 6.5 of \cite{EKVdL}.

\begin{Lemma}\label{LemmaEqDebEquiOmot}
Consider a functor $F \colon \bA \to \bB$ between groupoids in $\cA.$
\begin{enumerate}
\item If $F$ is full, then $\pi_1(F) \colon \pi_1(\bA) \to \pi_1(\bB)$ is a regular epimorphism.
\item If $F$ is faithful, then $\pi_1(F) \colon \pi_1(\bA) \to \pi_1(\bB)$ is a monomorphism.
\item If $F$ is full and $\bB$ is proper, then $\pi_0(F) \colon \pi_0(\bA) \to \pi_0(\bB)$ is a monomorphism.
\item If $F$ is essentially surjective, then $\pi_0(F) \colon \pi_0(\bA) \to \pi_0(\bB)$ is a regular epimorphism.
\item If $\pi_0(F) \colon \pi_0(\bA) \to \pi_0(\bB)$ is a regular epimorphism and $\bB$ is proper, then $F$ is essentially surjective.
\end{enumerate}
\end{Lemma}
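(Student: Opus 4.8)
The plan is to prove each of the five items by translating the statements about $\pi_0$ and $\pi_1$ into properties of the comparison arrows $\partial(F)_0$, $\tau_d$ and the factorization $\beta$ witnessing properness, using the explicit limit descriptions of $\pi_0(\bB)$, $\pi_1(\bB)$ and of fullness/faithfulness/essential surjectivity from Definition \ref{DefFFWeakEq}. Since the base category $\cA$ is pointed regular with reflexive coequalizers, regular epimorphisms are stable under pullback and compose, and (regular epi, mono)-factorizations exist; these are the only tools needed beyond the universal properties. I will treat items (1)--(2) together, then (3), then (4)--(5) together, since these pairs share the relevant diagrams.

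For (1) and (2), recall that $\pi_1(\bB)$ is the joint kernel of $d,c\colon B_1\rightrightarrows B_0$, equivalently the kernel of $\langle d,c\rangle\colon B_1\to B_0\times B_0$, and similarly for $\bA$; the arrow $\pi_1(F)$ is induced by $F_1$. First I would observe that an arrow $x\colon X_0\to A_1$ factors through $\epsilon_{\bA}$ precisely when $x\cdot d=0=x\cdot c$, i.e.\ when $x$, viewed via $\partial(F)_0$, lands in the fibre over $(0,0)$ of the span $\delta(F)_0,\gamma(F)_0$. Then, using the pullback-style description of $\partial(F)_0$ in \ref{TextFFWeakEqui} (with $\partial(F)_0$ fitting into the diagram over $A_0\xrightarrow{F_0}B_0\xleftarrow{F_0}A_0$ and the middle $d,c\colon B_1\rightrightarrows B_0$), I would identify $\pi_1(F)$ with the restriction of $\partial(F)_0$ to the joint kernels. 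Concretely: pulling back $\partial(F)_0$ along the inclusion of $\pi_1(\bB)=\Ker\langle d,c\rangle$ into the comma object and identifying the corresponding sub-object of $\vec{\bA}$-type data with $\pi_1(\bA)$, one gets a pullback square with $\pi_1(F)$ on one side and $\partial(F)_0$ (or a base change of it) on the other. Since $F$ full means $\partial(F)_0$ is a regular epi and $F$ faithful means $\partial(F)_0$ is a mono, and both properties are pullback-stable, (1) and (2) follow.

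For (3), I would use the characterization of properness of $\bB$: the factorization $\beta\colon B_1\to R[\eta_{\bB}]$ of $(d,c)$ through the kernel pair of $\eta_{\bB}$ is a regular epi. The goal is that $\pi_0(F)\colon\pi_0(\bA)\to\pi_0(\bB)$ is a mono, equivalently that its kernel pair is trivial, equivalently (since $\eta_{\bA}$ is a regular epi) that two points $a_0,a_0'\colon X\rightrightarrows A_0$ with $a_0\cdot F_0$ and $a_0'\cdot F_0$ identified by $\eta_{\bB}$ are already identified by $\eta_{\bA}$. From properness of $\bB$, $(F_0a_0,F_0a_0')$ being in $R[\eta_{\bB}]$ is (regular-epi-locally) witnessed by an arrow into $B_1$; fullness of $F$ (via surjectivity of $\partial(F)_0$ onto the relevant fibre of the comma construction) lets me lift that $B_1$-arrow to an $A_1$-arrow connecting $a_0$ and $a_0'$ — after pulling back along the regular epi $\beta$ to make the witness available. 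Composing with $\eta_{\bA}$ then equalizes $a_0$ and $a_0'$ after a regular-epi base change, hence $\pi_0(F)$ is a mono. This is the step I expect to be the main obstacle: carefully setting up the pullback of $\beta$ against the kernel-pair-of-$\pi_0(F)$ and checking that fullness supplies exactly the lift needed, rather than something weaker, without tacitly using exactness of $\cA$.

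For (4) and (5), the relevant object is the pullback $A_0\times_{F_0,d}B_1\xrightarrow{\beta_d}B_1\xrightarrow{c}B_0$ from Definition \ref{DefFFWeakEq}(3), whose composite first row is by definition a regular epi exactly when $F$ is essentially surjective. I would compute $\pi_0(F)$ via the coequalizer presentations: $\pi_0(\bB)$ receives a regular epi $\eta_{\bB}\colon B_0\to\pi_0(\bB)$ which coequalizes $d,c$. For (4), from essential surjectivity the composite $A_0\times_{F_0,d}B_1\to B_0$ is a regular epi, and it equals $\alpha_d\cdot F_0\cdot(\text{something})$ up to the relation generated by $d,c$, more precisely its composite with $\eta_{\bB}$ factors through $\eta_{\bA}\cdot\pi_0(F)$ (using $c\cdot\eta_{\bB}=d\cdot\eta_{\bB}$ to replace $\beta_d\cdot c$ by $\beta_d\cdot d=\alpha_d\cdot F_0$); hence $\eta_{\bA}\cdot\pi_0(F)$ is a regular epi precomposed with a regular epi, so $\pi_0(F)$ is a regular epi. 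For (5), conversely, assume $\pi_0(F)$ is a regular epi and $\bB$ proper; then I pull the regular epi $\beta\colon B_1\to R[\eta_{\bB}]$ back along the arrow $\langle\eta_{\bA}\cdot\pi_0(F)\circ?,\,\id\rangle$-type comparison built from $\eta_{\bB}$ and the surjectivity of $\pi_0(F)$, to produce, regular-epi-locally over $B_0$, an arrow into $B_1$ starting at a point in the image of $F_0$; this is precisely the data exhibiting essential surjectivity of $F$ once one checks the pulled-back top row is a regular epi. The pattern in (3) and (5) is the same: properness converts a statement about $\pi_0$ into a statement about $B_1$-arrows modulo a regular epi, and that is where the real work lies; items (1), (2), (4) are comparatively formal consequences of pullback-stability of regular epis and monos.
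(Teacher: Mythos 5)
Your plan follows the paper's own proof in all five items: the pullback square identifying $\pi_1(F)$ with a base change of $\partial(F)_0$ along the monomorphism built from $\epsilon_{\bB}$ for (1)--(2), the chain of regular-epi base changes along $\eta_{\bA}$, $\beta$ (properness) and $\partial(F)_0$ (fullness) followed by epi-cancellation for (3), the identity $\beta_d\cdot c\cdot\eta_{\bB}=\alpha_d\cdot\eta_{\bA}\cdot\pi_0(F)$ for (4), and pulling back $\beta$ along the comparison into the pullback of $\eta_{\bB}$ against $\eta_{\bA}\cdot\pi_0(F)$ for (5). The details you defer are exactly those the paper verifies --- notably, in (5), that the square comparing $\beta_d$ with $\beta$ over that pullback is itself a pullback, so that the canonical arrow $t$ is a regular epi and hence so is $\beta_d\cdot c$ --- and they go through as you anticipate.
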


\begin{proof} 1 and 2. As in \ref{TextFFWeakEqui}, we write $\partial(F)_0  \colon A_1 \to A_0 \times_{F_0,d} B_1 \times_{c,F_0} A_0$ 
for the unique arrow such that $\partial(F)_0  \cdot \delta(F)_0 = d, \partial(F)_0  \cdot \pi_a = F_1, \partial(F)_0  \cdot \gamma(F)_0 = c.$ Consider also the unique arrow
$\varphi \colon \pi_1(\bB) \to A_0 \times_{F_0,d} B_1 \times_{c,F_0} A_0$ such that $\varphi \cdot \delta(F)_0 = 0, \varphi \cdot \pi_a = 
\epsilon_{\bB}, \varphi \cdot \gamma(F)_0 = 0.$ Such a $\varphi$ is a monomorphism because $\epsilon_{\bB}$ is. 
We are going to prove that the following diagram is a pullback
$$\xymatrix{\pi_1(\bA) \ar[d]_{\epsilon_{\bA}} \ar[r]^-{\pi_1(F)} & \pi_1(\bB) \ar[d]^{\varphi} \\
A_1 \ar[r]_-{\partial(F)_0 } \ar@{}[ru]|{(1)} & A_0 \times_{F_0,d} B_1 \times_{c,F_0} A_0}$$
This immediately implies that, if $F$ is full (that is, if $\partial(F)_0 $ is a regular epimorphism), then $\pi_1(F)$ is a regular epimorphism.
Moreover, since $\epsilon_{\bA}$ is a monomorphism, if $F$ is faithful (that is, if $\partial(F)_0 $ is a monomorphism), then $\pi_1(F)$ is 
a monomorphism. For the commutativity of (1), just compose with the limit projections $\delta(F)_0, \pi_a, \gamma(F)_0.$ For the universality
of (1), consider the comparison $s$ with the pullback
$$\xymatrix{\pi_1(\bA) \ar[rr]^-{\pi_1(F)} \ar[dd]_{\epsilon_{\bA}} \ar[rd]^{s} & & \pi_1(\bB) \ar[dd]^{\varphi} \\
& A_1 \times_{\partial(F)_0 , \varphi} \pi_1(\bB) \ar[ld]_{\varphi'} \ar[ru]^{f'} \\
A_1 \ar[rr]_-{\partial(F)_0 } & & A_0 \times_{F_0,d} B_1 \times_{c,F_0} A_0}$$
Since
$$\varphi' \cdot d = \varphi' \cdot \partial(F)_0  \cdot \delta(F)_0 = f' \cdot \varphi \cdot \delta(F)_0 = f' \cdot 0 = 0 \;,\;\;
\varphi' \cdot c = \varphi' \cdot \partial(F)_0  \cdot \gamma(F)_0 = f' \cdot \varphi \cdot \gamma(F)_0 = f' \cdot 0 = 0$$
there exists a unique arrow $t \colon A_1 \times_{\partial(F)_0 ,\varphi}\pi_1(\bB) \to \pi_1(\bA)$ such that $t \cdot \epsilon_{\bA} = \varphi'.$
Moreover, 
$$s \cdot t \cdot \epsilon_{\bA} = s \cdot \varphi' = \epsilon_{\bA} \;,\;\; t \cdot s \cdot \varphi' = t \cdot \epsilon_{\bA} = \varphi'$$
and then $s \cdot t = \id$ and $t \cdot s = \id$ respectively because $\epsilon_{\bA}$ and $\varphi'$ are monomorphisms. \\
3. Let $x,y \colon S \rightrightarrows \pi_0(\bA)$ be two arrows such that $x \cdot \pi_0(F) = y \cdot \pi_0(F).$
In order to prove that $x=y,$ consider the pullbacks

$$
\xymatrix@!=4ex{
&&\bar S\ar[dl]_{\bar x}\ar[dr]^{\bar y}
\\
&S_x\ar[dl]_{x'} \ar[dr]^{\eta_x}
&&S_y\ar[dl]_{\eta_y} \ar[dr]^{y'}
\\
A_0\ar[dr]_{\eta_{\bA}}
&&S\ar[dl]^{x}\ar[dr]_{y}
&&A_0\ar[dl]^{\eta_{\bA}}
\\
&\pi_0(\bA)&&\pi_0(\bA)
}
$$

%$$\xymatrix{S_x \ar[r]^-{x'} \ar[d]_{\eta_x} & A_0 \ar[d]^{\eta_{\bA}} \\ S \ar[r]_-{x} & \pi_0(\bA)} \;\;\;\;\;
%\xymatrix{S_y \ar[r]^-{y'} \ar[d]_{\eta_y} & A_0 \ar[d]^{\eta_{\bA}} \\ S \ar[r]_-{y} & \pi_0(\bA)} \;\;\;\;\;
%\xymatrix{\overline S \ar[r]^-{\overline y} \ar[d]_{\overline x} & S_y \ar[d]^{\eta_y} \\
%S_x \ar[r]_-{\eta_x} & S}$$
and the factorization 
$$\xymatrix{B_1 \ar@<-0,5ex>[rr]_-{c} \ar@<0,5ex>[rr]^-{d} \ar[rd]_{\beta} & & 
B_0 \ar[r]^{\eta_{\bB}} & \pi_0{\bB} \\
& R[\eta_{\bB}] \ar@<-0,5ex>[ru]_{r_c} \ar@<0,5ex>[ru]^{r_d}}$$
Since
$$\overline x \cdot x' \cdot F_0 \cdot \eta_{\bB} = \overline x \cdot x' \cdot \eta_{\bA} \cdot \pi_0(F) = 
\overline x \cdot \eta_x \cdot x \cdot \pi_0(F) = $$
$$= \overline y \cdot \eta_y \cdot y \cdot \pi_0(F) = \overline y \cdot y' \cdot \eta_{\bA} \cdot \pi_0(F) =
\overline y \cdot y' \cdot F_0 \cdot \eta_{\bB}$$
there exists a unique arrow $s \colon \overline S \to R[\eta_{\bB}]$ such that
$s \cdot r_d = \overline x \cdot x' \cdot F_0$ and $s \cdot r_c = \overline y \cdot y' \cdot F_0.$
Now we can construct the pullback
$$\xymatrix{S' \ar[r]^-{s'} \ar[d]_{\beta'} & B_1 \ar[d]^{\beta} \\
\overline S \ar[r]_-{s} & R[\eta_{\bB}]}$$
and, since
$$\beta' \cdot \overline x \cdot x' \cdot F_0 = \beta' \cdot s \cdot r_d = s' \cdot \beta \cdot r_d = s' \cdot d \;,\;\;
\beta' \cdot \overline y \cdot y' \cdot F_0 = \beta' \cdot s \cdot r_c = s' \cdot \beta \cdot r_c = s' \cdot c$$
there exists a unique arrow $\sigma \colon S' \to A_0 \times_{F_0,d} B_1 \times_{c,F_0} A_0$ such that
$\sigma \cdot \delta(F)_0 = \beta' \cdot \overline x \cdot x',$ $\sigma \cdot \pi_a = s'$ and  
$\sigma \cdot \gamma(F)_0 = \beta' \cdot \overline y \cdot y'.$ We can construct one more pullback
$$\xymatrix{S'' \ar[r]^-{\sigma'} \ar[d]_{f'} & A_1 \ar[d]^{\partial(F)_0 } \\
S' \ar[r]_-{\sigma} & A_0 \times_{F_0,d} B_1 \times_{c,F_0} A_0}$$
Since $\partial(F)_0 $ and $\beta$ are by assumption regular epimorphisms, then $f'$ and $\beta'$ also are reguar epimorphisms.
Moreover, $\eta_x$ and $\eta_y$ are regular epimorphisms (because $\eta_{\bA}$ is a regular epimorphism) 
and therefore $\overline x$ and $\overline y$ also are regular epimorphisms. Finally, to check that $x=y$ it suffices to 
check that $f' \cdot \beta' \cdot \overline x \cdot \eta_x \cdot x = f' \cdot \beta' \cdot \overline y \cdot \eta_y \cdot y \colon$
$$f' \cdot \beta' \cdot \overline x \cdot \eta_x \cdot x = f' \cdot \beta' \cdot \overline x \cdot x' \cdot \eta_{\bA} =
f' \cdot \sigma \cdot \delta(F)_0 \cdot \eta_{\bA} = \sigma' \cdot \partial(F)_0  \cdot \delta(F)_0 \cdot \eta_{\bA} = \sigma' \cdot d \cdot \eta_{\bA} =$$
$$= \sigma' \cdot c \cdot \eta_{\bA} = \sigma' \cdot \partial(F)_0  \cdot \gamma(F)_0 \cdot \eta_{\bA} = f' \cdot \sigma \cdot \alpha(F)_0 \cdot \eta_{\bA}
= f' \cdot \beta' \cdot \overline y \cdot y' \cdot \eta_{\bA} = f' \cdot \beta' \cdot \overline y \cdot \eta_y \cdot y$$
4. Assume that $F$ is essentially surjective, that is, $\beta_d \cdot c$ is a regular epimorphism
$$\xymatrix{A_0 \times_{F_0,d}B_1 \ar[d]_{\alpha_d} \ar[r]^-{\beta_d} 
& B_1 \ar[d]^{d} \ar[r]^-{c} & B_0 \\
A_0 \ar[r]_-{F_0} & B_0}$$
This implies that $\beta_d \cdot c \cdot \eta_{\bB}$ is a regular epimorphism. Moreover,
$$\beta_d \cdot c \cdot \eta_{\bB} = \beta_d \cdot d \cdot \eta_{\bB} =
\alpha_d \cdot F_0 \cdot \eta_{\bB} = \alpha_d \cdot \eta_{\bA} \cdot \pi_0(F)$$
so that $\pi_0(F)$ is a regular epimorphism. \\
5. Assume that $\pi_0(F)$ is a regular epimorphism. In the following pullback, $t_2$ is 
therefore a regular epimorphism
$$\xymatrix{A_0 \times_{\eta_{\bA} \cdot \pi_0(F), \eta_{\bB}}B_0 \ar[rr]^-{t_2}
\ar[d]_{t_1} & & B_0 \ar[d]^{\eta_{\bB}} \\
A_0 \ar[r]_{\eta_{\bA}} & \pi_0(\bA) \ar[r]_{\pi_0(F)} & \pi_0(\bB)}$$
Since
$$\beta_d \cdot c \cdot \eta_{\bB} = \beta_d \cdot d \cdot \eta_{\bB} =
\alpha_d \cdot F_0 \cdot \eta_{\bB} = \alpha_d \cdot \eta_{\bA} \cdot \pi_0(F)$$
there exists a unique arrow 
$t \colon A_0 \times_{F_0,d}B_1 \to A_0 \times_{\eta_{\bA} \cdot \pi_0(F), \eta_{\bB}}B_0$
such that $t \cdot t_1 = \alpha_d$ and $t \cdot t_2 = \beta_d \cdot c.$
To prove that $\beta_d \cdot c$ is a regular epimorphism, 
it remains to show that $t$ is a regular epimorphism.
For this, observe that, since $\eta_{\bA} \cdot \pi_0(F) = F_0 \cdot \eta_{\bB},$ 
the previous pullback can be split in two pullbacks
$$\xymatrix{A_0 \times_{\eta_{\bA} \cdot \pi_0(F), \eta_{\bB}}B_0 \ar[d]_{t_1}
\ar[r]^-{F_0'} & R[\eta_{\bB}] \ar[d]_{r_d} \ar[r]^-{r_c} & B_0 \ar[d]^{\eta_{\bB}} \\
A_0 \ar[r]_-{F_0} \ar@{}[ru]|{(1)} & B_0 \ar[r]_-{\eta_{\bB}} & \pi_0(\bB)}$$
with $F_0' \cdot r_c = t_2.$ Consider now the following diagram
$$\xymatrix{A_0 \times_{F_0,d}B_1 \ar[d]_{\beta_d} \ar[r]^-{t} &
A_0 \times_{\eta_{\bA} \cdot \pi_0(F), \eta_{\bB}}B_0 \ar[d]_{F_0'} \ar[r]^-{t_1}
& A_0 \ar[d]^{F_0} \\
B_1 \ar[r]_-{\beta} \ar@{}[ru]|{(2)} & R[\eta_{\bB}] \ar[r]_-{r_d} \ar@{}[ru]|{(1)} & B_0}$$
Composing with $r_d$ and $r_c,$ we check that (2) commutes:
$$t \cdot F_0' \cdot r_d = t \cdot t_1 \cdot F_0 =
\alpha_d \cdot F_0 = \beta_d \cdot d = \beta_d \cdot \beta \cdot r_d \;,\;\;
t \cdot F_0' \cdot r_c = t \cdot t_2 = \beta_d \cdot c = \beta_d \cdot \beta \cdot r_c$$
Finally, (2)+(1) is a pullback (because $t \cdot t_1 = \alpha_d$ and $\beta \cdot r_d = d$),
(1) is a pullback and (2) commutes, so that (2) is a pullback. This implies that $t$ is a regular
epimorphism because $\bB$ is proper.
\end{proof}

\begin{Proposition}\label{PropNonLinSnake}
Let $F \colon \bA \to \bB$ be a fibration between groupoids in $\cA.$ If $\bA, \bB$ and $\bK(F)$ 
are proper, then there exists an exact sequence
$$\xymatrix{\pi_1(\bKer(F)) \ar[rr]^-{\pi_1(K_F)} & &  \pi_1(\bA) \ar[r]^{\pi_1(F)} & \pi_1(\bB) \ar[r]
& \pi_0(\bKer(F)) \ar[rr]^-{\pi_0(K_F)} & & \pi_0(\bA) \ar[r]^{\pi_0(F)} & \pi_0(\bB)}$$
\end{Proposition}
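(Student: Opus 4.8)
The plan is to deduce this from Proposition~\ref{PropNonLinSnail} by transporting its exact sequence along the comparison functor $J\colon\bKer(F)\to\bK(F)$ of~\ref{TextIntroPropFibrCompKer}, which satisfies $J\cdot K(F)=K_F$. First I would record that, since $F$ is a fibration, Proposition~\ref{PropFibrCompKer}(1) makes $J$ a weak equivalence, i.e.\ full, faithful and essentially surjective; and that, by 2-functoriality of $\pi_0$ and $\pi_1$ (see~\ref{TextPi0Pi1}), one has $\pi_i(K_F)=\pi_i(J)\cdot\pi_i(K(F))$ for $i=0,1$.

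The second step is to show that $\pi_1(J)$ and $\pi_0(J)$ are isomorphisms. For $\pi_1(J)$: by Lemma~\ref{LemmaEqDebEquiOmot}(1) it is a regular epimorphism (since $J$ is full) and by Lemma~\ref{LemmaEqDebEquiOmot}(2) it is a monomorphism (since $J$ is faithful), hence invertible, because in a regular category a monomorphic regular epimorphism is an isomorphism. For $\pi_0(J)$: by Lemma~\ref{LemmaEqDebEquiOmot}(4) it is a regular epimorphism (since $J$ is essentially surjective), and by Lemma~\ref{LemmaEqDebEquiOmot}(3) it is a monomorphism (since $J$ is full and $\bK(F)$ is proper by hypothesis), hence again an isomorphism. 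I would stress that this is the only place where properness of $\bK(F)$ — rather than of $\bKer(F)$, which is not assumed — is actually consumed.

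Then I would invoke Proposition~\ref{PropNonLinSnail} for $F$ itself: since $\bA$, $\bB$ and $\bK(F)$ are proper, the sequence
$$\xymatrix{\pi_1(\bK(F)) \ar[r]^-{\pi_1(K(F))} & \pi_1(\bA) \ar[r]^-{\pi_1(F)} & \pi_1(\bB) \ar[r]^-{D} & \pi_0(\bK(F)) \ar[r]^-{\pi_0(K(F))} & \pi_0(\bA) \ar[r]^-{\pi_0(F)} & \pi_0(\bB)}$$
is exact, where $D$ is the connecting morphism of~\ref{TextConnecting}. I would then \emph{define} the connecting morphism of the asserted sequence to be the composite $\pi_1(\bB)\xrightarrow{D}\pi_0(\bK(F))\xrightarrow{\pi_0(J)^{-1}}\pi_0(\bKer(F))$. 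Using the identities $\pi_i(K_F)=\pi_i(J)\cdot\pi_i(K(F))$, the asserted sequence and the one above fit into a commutative ladder whose vertical arrows are $\pi_1(J)$, $\id$, $\id$, $\pi_0(J)$, $\id$, $\id$; all of them are isomorphisms by the second step, and the bottom row is exact. Since exactness in $\cA$ — being a complex together with a regular epimorphic factorization through a kernel — is preserved and reflected by isomorphisms arranged in such a ladder (an isomorphism restricts to an isomorphism between the relevant kernels, and a regular epimorphism composed with an isomorphism is again a regular epimorphism), the top row is exact, which is the claim.

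I do not expect a serious obstacle: the substantive work is packed into Propositions~\ref{PropFibrCompKer} and~\ref{PropNonLinSnail} and Lemma~\ref{LemmaEqDebEquiOmot}, all established earlier. The only point deserving care is the injectivity of $\pi_0(J)$ in the second step, which is precisely where properness of $\bK(F)$ enters; once that is in hand, the transfer of exactness along the ladder in the third step is a routine diagram chase in the regular category $\cA$.
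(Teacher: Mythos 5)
Your argument is correct and is essentially the paper's own proof: the paper likewise forms the ladder relating the snail sequence of Proposition~\ref{PropNonLinSnail} to the asserted sequence via the comparison $J\colon\bKer(F)\to\bK(F)$, and concludes from Proposition~\ref{PropFibrCompKer} and Lemma~\ref{LemmaEqDebEquiOmot} that $\pi_0(J)$ and $\pi_1(J)$ are isomorphisms. Your extra remarks (where properness of $\bK(F)$ is used, and defining the connecting morphism as $D\cdot\pi_0(J)^{-1}$) just make explicit what the paper leaves implicit.
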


\begin{proof}
Just consider the following commutative diagram
$$\xymatrix{\pi_1(\bK(F)) \ar[rr]^-{\pi_1(K(F))} & &  \pi_1(\bA) \ar[r]^{\pi_1(F)} & \pi_1(\bB) \ar[r]^-{D} 
& \pi_0(\bK(F)) \ar[rr]^-{\pi_0(K(F))} & & \pi_0(\bA) \ar[r]^{\pi_0(F)} & \pi_0(\bB) \\
\pi_1(\bKer(F)) \ar[u]^{\pi_1(J)} \ar[rru]_{\pi_1(K_F)} 
& & & & \pi_0(\bKer(F)) \ar[u]^{\pi_0(J)} \ar[rru]_{\pi_0(K_F)} }$$
By Proposition \ref{PropNonLinSnail}, the row is exact. By Proposition \ref{PropFibrCompKer}
and Lemma \ref{LemmaEqDebEquiOmot} applied to the comparison $J$
$$\xymatrix{\bK(F) \ar[r]^-{K(F)} & \bA \ar[r]^-{F} & \bB \\
\bKer(F) \ar[u]^{J} \ar[ru]_{K_F} }$$
the arrows $\pi_1(J)$ and $\pi_0(J)$ are isomorphisms.
\end{proof}

\section{Comparing the snake and the snail sequences}

In this section $\cA$ is a pointed regular category with reflexive coequalizers. 

In Section 4 we got the Snake sequence associated with a fibration as a special case 
of the Snail sequence associated with an arbitrary functor. In principle one can work in the opposite way.
This is because any functor between internal groupoids can be turned, up to an equivalence, into a fibration (in fact, a split epi fibration).

\begin{Proposition}\label{PropFunctFibr}
Let $F \colon \bA \to \bB$ be a functor between groupoids in $\cA.$ In the strong h-pullback
$$\xymatrix{\bF(F) \ar[rr]^-{F'} \ar[d]_{E} & & \bB \ar[d]^{\Id} \\
\bA \ar[rr]_-{F} & \ar@{}[u]|{f(F) \Downarrow} & \bB}$$
the functor $F'$ is a split epi fibration (and the functor $E$ is an equivalence).
\end{Proposition}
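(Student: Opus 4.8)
The plan is to unwind the explicit description of the strong h-pullback $\bF(F)$ from \ref{TextStrongHpbGrpdDescr}, specialized to the case $G = F$, $\bC = \bA$ and the right-hand leg equal to $\Id_\bB$. In that description the object of objects is $F(F)_0 = A_0 \times_{F_0,d} B_1$ (the h-pullback of $F_0$ and $\Id_{B_0}$ reduces to this, since $\vec\bB$ plays no nontrivial role on the identity side), with $E_0 = \alpha_d$ the first projection, $F'_0 = c \cdot (\text{second projection})$, and $f(F)_0$ the arrow picking out the chosen $B_1$-component; similarly $F(F)_1 = A_1 \times_{F_1, \text{something}} \vec B_1$. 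First I would write down these level-$0$ and level-$1$ formulas carefully, together with $\underline d$, $\underline c$ on $\bF(F)$.

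Next, to prove $F'$ is a split epi fibration, by Definition \ref{DefFibration} I must exhibit a \emph{split} epimorphism $\tau_d \colon F(F)_1 \to F(F)_0 \times_{F'_0, d} B_1$, i.e.\ produce a section. The key point is that $\bF(F)$ was built precisely so that every object comes equipped with a distinguished arrow of $\bB$ into it (the component $f(F)_0$), and $\vec\bB$ supplies, for any arrow of $B_1$ with the right domain, a commutative square and hence an arrow of $\bF(F)_1$. Concretely, given a compatible pair consisting of an object of $\bF(F)$ and an arrow $b \in B_1$ out of its $F'_0$-image, one forms in $\vec B_1$ the commutative square whose sides are the distinguished arrow at the source, $b$, and the distinguished arrow at the target (the last obtained by composing), together with the identity $A_1$-component $e \cdot (\text{object})$; this defines the section of $\tau_d$. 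Checking that this is genuinely a section amounts to composing with the various limit projections defining $\bF(F)_1$ and $\bF(F)_0 \times_{F'_0,d} B_1$, which is routine once the formulas are in place. That $\tau_d$ is split (not merely regular epi) is exactly what this explicit section gives, so no exactness hypothesis on $\cA$ is needed here.

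Finally, for the parenthetical claim that $E$ is an equivalence: by \ref{TextDefEquivFFunct} it suffices to check that $E$ is full, faithful, and surjective in the sense of Definition \ref{DefFFWeakEq}. Fullness and faithfulness reduce to $\partial(E)_0$ being an isomorphism, which follows from the level-$1$ description of $\bF(F)$ (an arrow of $\bF(F)$ is determined by an arrow of $\bA$ together with a commutative square in $\bB$ compatible with it, and the $\vec\bB$-data is uniquely reconstructible from the $A_1$-component, so $\partial(E)_0$ is built from isomorphisms). For surjectivity one must produce a \emph{split} epimorphism in the relevant pullback of Definition \ref{DefFFWeakEq}; again the distinguished arrow $f(F)_0$ attached to each object of $\bF(F)$ furnishes the splitting, so $E$ is an equivalence, not merely a weak equivalence. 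Alternatively, one may note that $F = E \cdot F'$ with $F'$ a split epi fibration is, up to this equivalence $E$, the canonical replacement, and invoke \ref{TextPi0Pi1}-style $2$-functoriality; but the direct verification via Definition \ref{DefFFWeakEq} is cleanest.

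The main obstacle I anticipate is purely bookkeeping: matching the abstract limit description of \ref{TextStrongHpbGrpdDescr} with the concrete arrows $\alpha_d, \beta_d, \tau_d$ of Definition \ref{DefFibration}, and keeping straight which pullback is which, so that the candidate section is seen to land in the right object and to compose correctly with all projections. There is no conceptual difficulty — the whole content is that $\bF(F)$ is designed to carry, on the nose, a choice of cartesian lifting — but the diagram chase through the nested limits must be done with care.
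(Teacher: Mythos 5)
Your plan is correct and is essentially the paper's own proof: the paper likewise unwinds the explicit limit description of $\bF(F)$ and builds an explicit section $\sigma_c$ of $\tau_c$ (equivalently one could use $\tau_d$) whose $E_1$-component is the identity $\alpha_c \cdot E_0 \cdot e$ and whose $\vec B_1$-component is the commutative square formed from the given arrow of $\bB$ and the distinguished arrow $f(F)_0$, exactly the construction you sketch, verified by composing with the limit projections. Just watch the orientation bookkeeping: with the $2$-cell $f(F) \colon F' \Rightarrow E \cdot F$ coming from \ref{TextStrongHpbGrpdDescr}, the object level is $\bF(F)_0 \cong B_1 \times_{c,F_0} A_0$ (the distinguished arrow lands in $F_0 E_0$), not $A_0 \times_{F_0,d} B_1$ as you wrote; also note that the paper's proof only establishes the split epi fibration claim and leaves the parenthetical statement that $E$ is an equivalence unproved, whereas your sketch of it via Definition \ref{DefFFWeakEq} is a reasonable (if extra) addition.
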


\begin{proof} 
Explicitly, the above strong h-pullback is
$$\xymatrix{& & \bF(F)_1 \ar[rd]|-{f(F)_1} \ar[rrrd]^-{E_1} 
\ar@<-0.5ex>[ddd]_>>>>>>>>>>{\underline d} \ar@<0.5ex>[ddd]^>>>>>>>>>>{\underline c} \ar[lld]_-{F'_1} \\
B_1 \ar[rd]^{\id} \ar@<-0.5ex>[ddd]_{d} \ar@<0.5ex>[ddd]^{c} & & & \vec B_1 \ar[lld]_>>>>>>>>>{m_2 \cdot \pi_1} 
\ar@<-0.5ex>[ddd]_{m_1 \cdot \pi_1} \ar@<0.5ex>[ddd]^{m_2 \cdot \pi_2} \ar[rd]^{m_1 \cdot \pi_2} 
& & A_1 \ar[ld]_<<<<<<{F_1} \ar@<-0.5ex>[ddd]_{d}  \ar@<0.5ex>[ddd]^{c} \\
& B_1 \ar@<-0.5ex>[ddd]_<<<<<<<<{d} \ar@<0.5ex>[ddd]^<<<<<<<<{c} 
& & & B_1 \ar@<-0.5ex>[ddd]_<<<<<<<<<<{d} \ar@<0.5ex>[ddd]^<<<<<<<<<<{c} \\
& & \bF(F)_0 \ar[lld]_<<<<<<{F'_0} \ar[rd]_{f(F)_0} \ar[rrrd]|-{E_0} \\
B_0 \ar[rd]_{\id} & & & B_1 \ar[lld]^{d} \ar[rd]_{c} & & A_0 \ar[ld]^{F_0} \\
& B_0 & & & B_0}$$
We have to prove that the factorization $\tau_c$ is a split epimorphism
$$\xymatrix{\bF(F)_1 \ar[rr]^{F'_1} \ar[dd]_{\underline c} \ar[rd]^{\tau_c} & & B_1 \ar[dd]^{c} \\
& \bF(F)_0 \times_{F'_0,c}B_1 \ar[ru]^{\beta_c} \ar[ld]_{\alpha_c} \\
\bF(F)_0 \ar[rr]_{F'_0} & & B_0}$$
To construct a section of $\tau_c$ we use the following three factorizations through pullbacks:
$$\xymatrix{\bF(F)_0 \times_{F'_0,c}B_1 \ar[r]^-{\alpha_c} \ar[rd]^{x} \ar[dd]_{\beta_c} 
& \bF(F)_0 \ar[r]^-{f(F)_0} & B_1 \ar[dd]^{d} \\
& B_1 \times_{c,d}B_1 \ar[ru]^{\pi_2} \ar[ld]_{\pi_1} \\
B_1 \ar[rr]_-{c} & & B_0}$$
indeed $\alpha_c \cdot f(F)_0 \cdot d = \alpha_c \cdot F'_0 = \beta_c \cdot c,$
$$\xymatrix{\bF(F)_0 \times_{F'_0,c}B_1 \ar[r]^-{\alpha_c} \ar[rd]^{y} \ar[dd]_{x} & 
\bF(F)_0 \ar[r]^-{E_0} & A_0 \ar[r]^-{F_0} & B_0 \ar[d]^{e} \\
& B_1 \times_{c,d}B_1 \ar[rr]^-{\pi_2} \ar[d]_{\pi_1} & & B_1 \ar[d]^{d} \\
B_1 \times_{c,d}B_1 \ar[r]_-{m} & B_1 \ar[rr]_-{c} & & B_0}$$
indeed $\alpha_c \cdot E_0 \cdot F_0 \cdot e \cdot d = \alpha_c \cdot E_0 \cdot F_0 = 
\alpha_c \cdot f(F)_0 \cdot c = x \cdot \pi_2 \cdot c = x \cdot m \cdot c,$
$$\xymatrix{\bF(F)_0 \times_{F'_0,c}B_1\ar[rr]^-{x} \ar[dd]_{y} \ar[rd]^{z} 
& & B_1 \times_{c,d}B_1 \ar[dd]^{m} \\
& \vec B_1 \ar[ru]^{m_2} \ar[ld]_{m_1} \\
B_1 \times_{c,d}B_1 \ar[rr]_-{m} & & B_1}$$
indeed $y \cdot \pi_2$ factors through $e \colon B_0 \to B_1,$ so that 
$y \cdot m = y \cdot \pi_1.$
Now observe that
$$z \cdot m_2 \cdot \pi_1 = x \cdot \pi_1 = \beta_c \,,\,\,
z \cdot m_1 \cdot \pi_2 = y \cdot \pi_2 = \alpha_c \cdot E_0 \cdot F_0 \cdot e = 
\alpha_c \cdot E_0 \cdot e \cdot F_1$$
so that, by the universal property of $\bF(F)_1,$ we get a unique arrow
$$\sigma_c \colon \bF(F)_0 \times_{F'_0,c}B_1 \to \bF(F)_1$$
such that $\sigma_c \cdot F'_1 = \beta_c, \sigma_c \cdot f(F)_1 = z, 
\sigma_c \cdot E_1 = \alpha_c \cdot E_0 \cdot e.$
It remains to check that $\sigma_c$ is a section of $\tau_c.$
Composing with the projections of the limit $\bF(F)_0,$ we have
$$\sigma_c \cdot \underline c \cdot F'_0 = \sigma_c \cdot F'_1 \cdot c = \beta_c \cdot c = 
\alpha_c \cdot F'_0 \,,\,\,
\sigma_c \cdot \underline c \cdot E_0 = \sigma_c \cdot E_1 \cdot c = 
\alpha_c \cdot E_0 \cdot e \cdot c = \alpha_c \cdot E_0,$$
$$\sigma_c \cdot \underline c \cdot f(F)_0 = \sigma_c \cdot f(F)_1 \cdot m_2 \cdot \pi_2 
= z \cdot m_2 \cdot \pi_2 = x \cdot \pi_2 = \alpha_c \cdot f(F)_0$$
so that $\sigma_c \cdot \underline c = \alpha_c.$ Finally, composing with the pullback
projections $\alpha_c$ and $\beta_c,$ we get
$$\sigma_c \cdot \tau_c \cdot \alpha_c = \sigma_c \cdot \underline c = \alpha_c \,,\,\,
\sigma_c \cdot \tau_c \cdot \beta_c = \sigma_c \cdot F'_1 = \beta_c$$
so that $\sigma_c \cdot \tau_c = \id.$
\end{proof}

\begin{Text}\label{TextFunctFibr}{\rm
The first part of the statement of Proposition \ref{PropFunctFibr} can be improved: 
for any strong h-pullback
$$\xymatrix{\bP \ar[r]^{G'} \ar[d]_{F'} & \bA \ar[d]^{F} \\
\bC \ar@{}[ru]^{\varphi}|{\Rightarrow} \ar[r]_{G} & \bB}$$
in $\Grpd(\cA),$ the functors $F'$ and $G'$ are split epi fibrations. The proof is a straightforward 
generalization of the proof of Proposition \ref{PropFunctFibr}.
}\end{Text}

\begin{Text}\label{TextComparingKerKK}{\rm
Consider again the strong h-pullback $\bF(F)$ used in Proposition \ref{PropFunctFibr} 
together with the strong h-kernels of  $F$ and $F'$ and the kernel of $F'$
$$\xymatrix{\bKer(F') \ar[rrd]^{K_{F'}} \ar[d]_{J} \\
\bK(F') \ar[rr]_-{K(F')} \ar[d]_{L} & & \bF(F) \ar[rr]^-{F'} \ar[d]_{E} & & \bB \ar[d]^{\Id} \\
\bK(F) \ar[rr]_{K(F)} & & \bA \ar[rr]_-{F} & \ar@{}[u]|{f(F) \Downarrow} & \bB}$$
Clearly, $L$ is an equivalence, and $J$ also is an equivalence because $F'$ is a split epi fibration 
(Proposition \ref{PropFibrCompKer}). Moreover, by \ref{TextPastingLemma} applied to 
the diagram
$$\xymatrix{\bKer(F') \ar[d]_{0} \ar[r]^-{K_{F'}} & \bF(F) \ar[d]_{F'} \ar[rr]^-{E} & & \bA \ar[d]^{F} \\
[0]_0 \ar[r]_-{0} & \bB \ar[rr]_-{\Id} & \ar@{}[u]|{f(F) \, \Rightarrow} & \bB}$$
we get that the composite $J \cdot L \colon \bKer(F') \to \bK(F)$ is an isomorphism. \\
It remains to compare 
the Snail sequence associated with $F'$ with the Snail sequence associated with $F.$ 
As expected, they are isomorphic exact sequences: this is a special case of the 
naturality of the Snail sequence stated below.
}\end{Text}

\begin{Proposition}\label{PropNatSnail}
A diagram in $\Grpd(\cA)$ of the form
$$\xymatrix{\bA' \ar[d]_{E} \ar[rr]^-{F'} & & \bB' \ar[d]^{T} \\
\bA \ar[rr]_-{F} & \ar@{}[u]|{\Downarrow \varphi} & \bB}$$
induces a morphism of complexes
$$\xymatrix{\pi_1(\bK(F')) \ar[rr]^-{\pi_1(K(F'))} \ar[d]_{\pi_1(L)} & &  
\pi_1(\bA') \ar[r]^{\pi_1(F')} \ar[d]_{\pi_1(E)} & \pi_1(\bB') \ar[r]^-{D'} \ar[d]_{\pi_1(T)}
& \pi_0(\bK(F')) \ar[rr]^-{\pi_0(K(F'))} \ar[d]^{\pi_0(L)} & & 
\pi_0(\bA') \ar[r]^{\pi_0(F')} \ar[d]^{\pi_0(E)} & \pi_0(\bB') \ar[d]^{\pi_0(T)} \\
\pi_1(\bK(F)) \ar[rr]_-{\pi_1(K(F))} & &  \pi_1(\bA) \ar[r]_{\pi_1(F)} & 
\pi_1(\bB) \ar[r]_-{D} \ar@{}[ru]|{(*)}
& \pi_0(\bK(F)) \ar[rr]_-{\pi_0(K(F))} & & \pi_0(\bA) \ar[r]_{\pi_0(F)} & \pi_0(\bB)}$$
where $L \colon \bK(F') \to \bK(F)$ is the canonical comparison between the strong h-kernels.\\
In particular, if $T$ and $E$ (and then $L$) are equivalences, then the complexes 
associated with $F$ and $F'$ are isomorphic. (The same holds if $T, E$ and $L$ are
weak equivalences, assuming that the groupoids $\bA, \bB$ and $\bK(F)$ are proper.)
\end{Proposition}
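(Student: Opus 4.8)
The plan is to first produce the vertical comparison morphisms and then verify commutativity square by square, treating the square $(*)$ around the connecting morphisms as the only nontrivial part. The maps $\pi_1(E),\pi_0(E),\pi_1(T),\pi_0(T)$ come for free from the $2$-functoriality of $\pi_0$ and $\pi_1$ recalled in \ref{TextPi0Pi1}. The map $L\colon\bK(F')\to\bK(F)$ is obtained from the universal property of the strong h-kernel $\bK(F)$: starting from the strong h-kernel $\bK(F')$ with its data $K(F')$ and $k(F')_0\colon 0\Rightarrow K(F')\cdot F'$, one forms the composite $\bKer$-cone $K(F')\cdot E$ together with the $2$-cell $k(F')_0\cdot E$ pasted with $K_{F'}\cdot\varphi$ (using the square to turn a natural transformation over $F'$ into one over $F$), and $\ref{DefStrongHpb}$ gives the unique $L$ with $L\cdot K(F)=K(F')\cdot E$ and $L\cdot k(F)_0$ equal to that pasted $2$-cell. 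Then $\pi_0(L)$ and $\pi_1(L)$ are again obtained by $2$-functoriality.

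Next I would check that the four squares not involving $D$ commute. Two of them, namely the one with $\pi_1(K(F'))$ versus $\pi_1(K(F))$ and the one with $\pi_0(K(F'))$ versus $\pi_0(K(F))$, follow immediately from $L\cdot K(F)=K(F')\cdot E$ by applying the functors $\pi_1$ and $\pi_0$. The square with $\pi_1(F')$ versus $\pi_1(F)$ and the one with $\pi_0(F')$ versus $\pi_0(F)$ follow from $F'\cdot T=E\cdot F$ (up to the natural transformation $\varphi$, which $\pi_0$ and $\pi_1$ collapse, exactly as in the proof sketched inside \ref{TextPi0Pi1}) by applying $\pi_1$ and $\pi_0$. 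So all the ``easy'' squares are a direct consequence of $2$-functoriality and the defining equations of $L$.

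The main obstacle is the square $(*)$: $\pi_1(T)\cdot D=D'\cdot\pi_0(L)$. Recall from \ref{TextConnecting} that $D=\Delta\cdot\eta_{\bK(F)}$ where $\Delta\colon\pi_1(\bB)\to\bK(F)_0$ is characterized by $\Delta\cdot k(F)_0=\epsilon_{\bB}$ and $\Delta\cdot K(F)_0=0$, and similarly $D'=\Delta'\cdot\eta_{\bK(F')}$. I would first show $\pi_1(T)\cdot\Delta=\Delta'\cdot L_0$ at the level of $\bK(F)_0$: since $\Delta$ is a monomorphism (Lemma \ref{LemmaDeltaMono}), it suffices to compose with the two jointly monic legs $k(F)_0\colon\bK(F)_0\to B_1$ and $K(F)_0\colon\bK(F)_0\to A_0$ out of the limit $\bK(F)_0$. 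For $k(F)_0$: on one side $\pi_1(T)\cdot\Delta\cdot k(F)_0=\pi_1(T)\cdot\epsilon_{\bB}=\epsilon_{\bB'}\cdot T_1$ (naturality of $\epsilon$ under the group homomorphism $\pi_1(T)$), while on the other side $\Delta'\cdot L_0\cdot k(F)_0=\Delta'\cdot k(F')_0\cdot(\text{description of }L_0)$; here one has to unwind the definition of $L_0$ from the explicit limit description in \ref{TextStrongHkerGrpd} — $L_0$ is built from $T_1$ on the $\vec B$-component and from $E_0$ on the $A$-component — and check the two expressions agree using $\varphi$. For $K(F)_0$: $\pi_1(T)\cdot\Delta\cdot K(F)_0=0$ and $\Delta'\cdot L_0\cdot K(F)_0=\Delta'\cdot K(F')_0\cdot E_0=0\cdot E_0=0$. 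Granting $\pi_1(T)\cdot\Delta=\Delta'\cdot L_0$, the square $(*)$ follows by postcomposing with $\eta$ and using naturality of $\eta$: $D'\cdot\pi_0(L)=\Delta'\cdot\eta_{\bK(F')}\cdot\pi_0(L)=\Delta'\cdot L_0\cdot\eta_{\bK(F)}=\pi_1(T)\cdot\Delta\cdot\eta_{\bK(F)}=\pi_1(T)\cdot D$. Finally, the ``in particular'' clause: if $T$ and $E$ are equivalences, then $L$ is an equivalence (it is the comparison between strong h-kernels of equivalent functors — one may also invoke \ref{TextPastingLemma} as in \ref{TextComparingKerKK}), and by Lemma \ref{LemmaEqDebEquiOmot} (or directly by $2$-functoriality for honest equivalences) all of $\pi_1(E),\pi_0(E),\pi_1(T),\pi_0(T),\pi_1(L),\pi_0(L)$ are isomorphisms, so the morphism of complexes is an isomorphism; the weak-equivalence case is identical once the properness hypotheses on $\bA,\bB,\bK(F)$ are invoked so that Lemma \ref{LemmaEqDebEquiOmot} again forces all six vertical maps to be isomorphisms.
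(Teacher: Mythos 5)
Your proposal is correct and follows essentially the same route as the paper: the outer squares by $2$-functoriality together with the defining equations of $L$, and the square $(*)$ split through $\Delta,\Delta'$ and the $\eta$'s, with $\pi_1(T)\cdot\Delta=\Delta'\cdot L_0$ checked by composing with the jointly monic limit projections $k(F)_0$ and $K(F)_0$ of $\bK(F)_0$ (your appeal to Lemma \ref{LemmaDeltaMono} there is superfluous: joint monicity of the projections is all that is used). The only step you leave implicit is exactly the paper's one real computation: $L_0\cdot k(F)_0$ is the pasted composite $\overline{\varphi}\cdot m$ of $k(F')_0\cdot T_1$ with $K(F')_0\cdot\varphi$, and after precomposing with $\Delta'$ the second factor is $\Delta'\cdot K(F')_0\cdot\varphi=0$, so the multiplication collapses to $\Delta'\cdot k(F')_0\cdot T_1=\epsilon_{\bB'}\cdot T_1$, which matches $\pi_1(T)\cdot\Delta\cdot k(F)_0=\pi_1(T)\cdot\epsilon_{\bB}$ as you intended.
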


\begin{proof}
The non obvious part is to prove the commutativity of the square $(*)$ 
(the other squares commute by functoriality of $\pi_1$ and $\pi_0).$ 
We need an explicit description of $L_0 \colon \bK(F')_0 \to \bK(F)_0 \colon$ since
$$K(F')_0 \cdot \varphi \cdot d = K(F')_0 \cdot F'_0 \cdot T_0 = k(F')_0 \cdot c \cdot T_0 =
k(F')_0 \cdot T_1 \cdot c,$$ 
we get the following factorization
$$\xymatrix{\bK(F')_0 \ar[r]^-{K(F')_0} \ar[rd]^{\overline \varphi} \ar[d]_{k(F')_0} 
& A'_0 \ar[rd]^{\varphi} \\
B'_1 \ar[rd]_{T_1} & B_1 \times_{c,d}B_1 \ar[r]^-{\pi_2} \ar[d]_{\pi_1} & B_1 \ar[d]^{d} \\
& B_1 \ar[r]_{c} & B_0}$$
Moreover, since
$$\overline \varphi \cdot m \cdot d = \overline \varphi \cdot \pi_1 \cdot d = 
k(F')_0 \cdot T_1 \cdot d = k(F')_0 \cdot d \cdot T_0 = 0 \cdot T_0 = 0$$
$$\overline \varphi \cdot m \cdot c = \overline \varphi \cdot \pi_2 \cdot c = K(F')_0 \cdot \varphi \cdot c 
= K(F')_0 \cdot E_0 \cdot F_0$$
the universal property of $\bK(F)_0$ gives a unique arrow $L_0 \colon \bK(F')_0 \to \bK(F)_0$
such that $L_0 \cdot K(F)_0 = K(F')_0 \cdot E_0$ and $L_0 \cdot k(F)_0 = \overline \varphi \cdot m.$
Now we can split diagram $(*)$ in two parts
$$\xymatrix{\pi_1(\bB') \ar[r]^-{\Delta'} \ar[d]_{\pi_1(T)} & \bK(F')_0 \ar[d]^{L_0} \ar[r]^-{\eta_{\bK(F')}} 
& \pi_0(\bK(F')) \ar[d]^{\pi_0(L)} \\
\pi_1(\bB) \ar[r]_-{\Delta} \ar@{}[ru]|{(1)} & \bK(F)_0 \ar[r]_-{\eta_{\bK(F)}} \ar@{}[ru]|{(2)} & \pi_0(\bK(F))}$$
with square $(2)$ commuting by definition of $\pi_0(L).$ As far as square $(1)$ is concerned, 
we compose with the limit projections $K(F)_0$ and $k(F)_0.$ 
Composing both paths with $K(F)_0$ we get 0 :
$$\pi_1(T) \cdot \Delta \cdot K(F)_0 = \pi_1(T) \cdot 0 = 0 = 0 \cdot E_0 = 
\Delta' \cdot K(F')_0 \cdot E_0 = \Delta' \cdot L_0 \cdot K(F)_0$$
Composing with $k(F)_0$ we get
$$\pi_1(T) \cdot \Delta \cdot k(F)_0 = \pi_1(T) \cdot \epsilon_{\bB} = \epsilon_{\bB'} \cdot T_1 =
\Delta' \cdot k(F')_0 \cdot T_1 = \Delta' \cdot \overline \varphi \cdot \pi_1 = 
\Delta' \cdot \overline \varphi \cdot m = 
\Delta' \cdot L_0 \cdot k(F)_0$$
where the equality $\Delta' \cdot \overline \varphi \cdot \pi_1 = 
\Delta' \cdot \overline \varphi \cdot m$ comes from the fact that 
$\Delta' \cdot \overline \varphi \cdot \pi_2 = \Delta' \cdot K(F')_0 \cdot \varphi = 
0 \cdot \varphi = 0.$
\end{proof}

\section{The 2-functors $\pi_0$ and $\pi_1$ preserve exactness}

In this section, $\cA$ is a pointed regular category with reflexive coequalizers.

As an application of the snail lemma, in this section we prove that the 2-functors
$$\pi_0 \colon \Grpd(\cA) \to \cA \;,\;\; \pi_1 \colon \Grpd(\cA) \to \Grp(\cA)$$
introduced in Section 3 preserve exact sequences. The notion of exactness for a complex of internal functors is 
inspired by the notion of exactness in the 2-category of categorical groups introduced in \cite{PicBrEV}.

\begin{Definition}\label{Def2Exactness}{\rm 
Consider the following diagram in $\Grpd(\cA) \colon$
$$\xymatrix{& \bA \ar[rr]^-{0} \ar[rd]_{F} \ar[ld]_{F'} & & \bC \\
\bK(G) \ar[rr]_-{K(G)} & & \bB \ar[ru]_{G} \ar@{}[u]|{\Downarrow \, \varphi}}$$
We say that the sequence $(F, \varphi, G)$ is exact if the canonical comparison $F' \colon \bA \to \bK(G)$
is full and essentially surjective.
}\end{Definition}

\begin{Lemma}\label{LemmaPiPresExact}
Consider a functor between groupoids in $\cA,$ together with its strong h-kernel
$$\xymatrix{\bK(G) \ar[r]^-{K(G)} & \bB \ar[r]^-{G} & \bC}$$
Consider also the canonical comparisons $g_0$ and $g_1$ with the kernels of $\pi_0(G)$ and $\pi_1(G),$ as in the following diagrams
$$\xymatrix{\pi_0(\bK(G)) \ar[rr]^-{\pi_0(K(G))} \ar[rrd]_{g_0} & & \pi_0(\bB) \ar[r]^-{\pi_0(G)} & \pi_0(\bC) \\
& & \Ker(\pi_0(G)) \ar[u]_{k_{\pi_0(G)}}}
\;\;\;
\xymatrix{\pi_1(\bK(G)) \ar[rr]^-{\pi_1(K(G))} \ar[rrd]_{g_1} & & \pi_1(\bB) \ar[r]^-{\pi_1(G)} & \pi_1(\bC) \\
& & \Ker(\pi_1(G)) \ar[u]_{k_{\pi_1(G)}}}$$
\begin{enumerate}
\item The arrow $g_1$ is an isomorphism.
\item If $\bC$ is proper, the arrow $g_0$ is a regular epimorphism.
\end{enumerate}
\end{Lemma}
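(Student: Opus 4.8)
The plan is to identify the comparison arrows $g_0$ and $g_1$ with morphisms already studied earlier in the paper, namely to recognize that $K(G)\colon\bK(G)\to\bB$ is itself a functor whose induced maps $\pi_0(K(G))$ and $\pi_1(K(G))$ factor through the kernels of $\pi_0(G)$ and $\pi_1(G)$, and then to apply the properness/fullness machinery of Lemma~\ref{LemmaEqDebEquiOmot} together with exactness of the snail sequence (Proposition~\ref{PropNonLinSnail}) applied to the functor $G$. Concretely, I would first observe that, by Lemma~\ref{LemmaCompl} applied to $G$, the composites $\pi_1(K(G))\cdot\pi_1(G)$ and $\pi_0(K(G))\cdot\pi_0(G)$ are trivial, so the comparisons $g_1\colon\pi_1(\bK(G))\to\Ker(\pi_1(G))$ and $g_0\colon\pi_0(\bK(G))\to\Ker(\pi_0(G))$ exist and are uniquely determined by $g_i\cdot k_{\pi_i(G)}=\pi_i(K(G))$.

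For part (1), the key point is that exactness of the snail sequence of $G$ at the spot $\pi_1(\bB)$ (Proposition~\ref{PropNonLinSnail}, which does \emph{not} require a properness hypothesis on $\bC$ for that particular spot, since it follows from the pseudo-adjunction $[-]_1\dashv\pi_1$ as noted there) already gives exactness of
$$\xymatrix{\pi_1(\bK(G)) \ar[r]^-{\pi_1(K(G))} & \pi_1(\bB) \ar[r]^-{\pi_1(G)} & \pi_1(\bC)}$$
Hence $g_1$ is a regular epimorphism. To upgrade this to an isomorphism, I would show $g_1$ is also a monomorphism: this follows because $K(G)\colon\bK(G)\to\bA$ is a \emph{discrete fibration} by Lemma~\ref{LemmaKerFibrDiscr} (taking $F=G$ there), hence in particular faithful, and then by Lemma~\ref{LemmaEqDebEquiOmot}(2) the arrow $\pi_1(K(G))$ is a monomorphism, which forces its factorization $g_1$ through the monomorphism $k_{\pi_1(G)}$ to be a monomorphism as well. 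A monomorphism that is a regular epimorphism is an isomorphism, so $g_1$ is an isomorphism.

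For part (2), I would combine two facts. On one hand, by Lemma~\ref{LemmaKerFibrDiscr} the functor $K(G)$ is a discrete fibration, hence a fibration, hence full in the sense of Definition~\ref{DefFFWeakEq} (a discrete fibration has $\tau$ an isomorphism, which is in particular a regular epi $\partial$-level comparison); one should check that ``discrete fibration'' indeed implies ``full'' in the $\partial(F)_0$ sense used in Lemma~\ref{LemmaEqDebEquiOmot}, which is routine from the pullback descriptions. On the other hand, exactness of the snail sequence of $G$ at $\pi_0(\bK(G))$ (Proposition~\ref{PropNonLinSnail}, whose hypotheses require $\bB,\bC$ and $\bK(G)$ proper --- and here one must be careful that in the present statement only $\bC$ is assumed proper) is precisely exactness of
$$\xymatrix{\pi_1(\bB) \ar[r]^-{D} & \pi_0(\bK(G)) \ar[r]^-{\pi_0(K(G))} & \pi_0(\bB)}$$
Rather than invoking the full snail exactness, the cleaner route is: $g_0$ is a regular epimorphism if and only if $\pi_0(K(G))$ factors as a regular epi onto $\Ker(\pi_0(G))$. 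Since $K(G)$ is full and $\bB$ is the codomain, Lemma~\ref{LemmaEqDebEquiOmot}(3) shows $\pi_0(K(G))\colon\pi_0(\bK(G))\to\pi_0(\bB)$ is a monomorphism when $\bB$ is proper; but that is the wrong direction. Instead I would argue directly: build the comparison exactly as in the ``exactness in $\pi_0(\bK(F))$'' step of the proof of Proposition~\ref{PropNonLinSnail}, where a pullback square
$$\xymatrix{\pi_0(\bK(G)) \ar[r]^-{\tau'} \ar[d]_{\eta} & T \ar[d]^{\gamma} \\ \pi_0(\bK(G)) \ar[r]_-{g_0} & \Ker(\pi_0(G))}$$
is produced with $\tau'$ and $\gamma$ regular epimorphisms; the only input needed for $\gamma$ to be a regular epi is that $\bC$ (playing the role of $\bB$ in that argument, since $G\colon\bB\to\bC$) is proper, and $\tau'$ being a regular epi uses that $K(G)$ is a (discrete) fibration together with $\bB$ proper --- but here $K(G)$ being a discrete fibration is exactly the extra strength that replaces the properness of $\bB$, via Lemma~\ref{LemmaKerFibrDiscr}.

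\emph{Main obstacle.} The delicate bookkeeping is the shift of roles: the snail sequence results are stated for a functor $F\colon\bA\to\bB$ with hypotheses on $\bA,\bB,\bK(F)$, whereas here the relevant functor is $G\colon\bB\to\bC$ and \emph{only} $\bC$ is assumed proper. The reason this weaker hypothesis suffices is precisely Lemma~\ref{LemmaKerFibrDiscr}: $K(G)$ is a discrete fibration, so the ``$\tau'$ is a regular epi'' and ``$\lambda'\cdot\underline c'$ is a pullback'' arguments in Proposition~\ref{PropNonLinSnail} --- which there consumed properness of $\bA$ and $\bK(F)$ --- go through using the isomorphism $\K_d(K(G))$ instead. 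I expect the bulk of the work to be extracting, from the proof of Proposition~\ref{PropNonLinSnail}, exactly the sub-arguments that survive under ``$\bC$ proper'' alone, and checking that ``discrete fibration $\Rightarrow$ full'' in the $\partial(F)_0$ sense, so that Lemma~\ref{LemmaEqDebEquiOmot}(1),(3) can be invoked for $K(G)$.
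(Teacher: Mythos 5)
Your overall strategy, namely reducing both statements to the individual exactness spots in the proof of Proposition \ref{PropNonLinSnail} applied to $G\colon\bB\to\bC$, is essentially the paper's. For part (1) your argument is correct but roundabout: the paper simply notes that the pseudo-adjunction $[-]_1\dashv\pi_1$ of \ref{TextPi0Pi1} makes $\pi_1$ preserve (strong h-)kernels, and since $\cA$ has only identity 2-cells this gives directly that $g_1$ is an isomorphism. Your detour (regular epi from the $\pi_1$-spot of the snail sequence, which is itself proved by that pseudo-adjunction, plus mono from $K(G)$ being a discrete fibration by Lemma \ref{LemmaKerFibrDiscr}, hence faithful, hence $\pi_1(K(G))$ mono by Lemma \ref{LemmaEqDebEquiOmot}) is valid -- note that discrete fibration does imply faithful, since $\tau_d$ is a first-two-components restriction of $\partial(K(G))_0$ -- but it is not needed.

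Part (2), however, contains genuine confusion and, as written, does not establish the claim. First, your assertion that a discrete fibration is full in the $\partial(F)_0$ sense is false: $\tau_d$ iso controls $\langle\underline d,K(G)_1\rangle$ only, not $\langle\underline d,K(G)_1,\underline c\rangle$; already for $\bA$ the trivial groupoid and $\bB$ a nontrivial group, the h-kernel projection is faithful but not full. Second, the statement to prove is exactness at the \emph{last} spot, i.e.\ that the factorization of $\pi_0(K(G))$ through $\Ker(\pi_0(G))$ is a regular epi (exactness at $\pi_0(\bB)$), not the spot $\pi_0(\bK(G))$ that your displayed sequence refers to; and the square you quote from the proof of Proposition \ref{PropNonLinSnail} has top-left vertex $\bK(F)_0$ (not $\pi_0(\bK(F))$), left leg $\eta_{\bK(F)}$, and is merely commutative, not a pullback. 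Third, and most importantly, your way of handling the hypothesis mismatch is unsubstantiated: you claim that $\tau'$ being a regular epi consumes properness of $\bB$ (and of $\bK(G)$) and that Lemma \ref{LemmaKerFibrDiscr} ``replaces'' these, but you never prove any such replacement, so the argument has a hole exactly where the delicate point lies. In fact no replacement is needed: in the ``exactness in $\pi_0(\bA)$'' bullet of the paper's proof, $\gamma$ is a pullback of $\eta_{\bA}$, a regular epi with no properness assumption, and $\tau'$ is a pullback of $c'$, which by \ref{TextRestrRegEpi} is a regular epi precisely when the \emph{codomain} groupoid is proper; properness of the domain and of the h-kernel is never used at that spot. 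Translated to $G\colon\bB\to\bC$, only ``$\bC$ proper'' is consumed, which is why the paper can prove (2) by simply saying it is the last point of the snail lemma. Your proof of (2) becomes correct once you make this inspection explicit and drop both the fullness claim and the appeal to Lemma \ref{LemmaKerFibrDiscr}.
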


\begin{proof}
1. This follows from the pseudo-adjunction $[-]_1 \dashv \pi_1$ of \ref{TextPi0Pi1}. \\
2. This is the last point of the exact sequence of the snail lemma (Proposition \ref{PropNonLinSnail}).
\end{proof}

\begin{Proposition}\label{PropPiPresExact}
Consider an exact sequence in $\Grpd(\cA)$
$$\xymatrix{ & \bB \ar@{}[d]|{\Downarrow \, \varphi} \ar[rd]^{G} \\
\bA \ar[ru]^{F} \ar[rr]_-{0} & & \bC}$$
\begin{enumerate}
\item The sequence $\xymatrix{\pi_1(\bA) \ar[r]^-{\pi_1(F)} & \pi_1(\bB) \ar[r]^-{\pi_1(G)} & \pi_1(\bC)}$ is exact.
\item If $\bC$ is proper, the sequence $\xymatrix{\pi_0(\bA) \ar[r]^-{\pi_0(F)} & \pi_0(\bB) \ar[r]^-{\pi_0(G)} & \pi_0(\bC)}$ is exact.
\end{enumerate}
\end{Proposition}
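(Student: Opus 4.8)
The plan is to reduce everything to three ingredients already available: the factorization $F=F'\cdot K(G)$ through the strong h-kernel of $G$ supplied by the exactness hypothesis, the comparison arrows $g_0,g_1$ of Lemma~\ref{LemmaPiPresExact}, and the homotopical behaviour of $\pi_0,\pi_1$ recorded in \ref{TextPi0Pi1} and Lemma~\ref{LemmaEqDebEquiOmot}. Throughout, $F'\colon\bA\to\bK(G)$ denotes the canonical comparison, so $F'\cdot K(G)=F$, and by Definition~\ref{Def2Exactness} the functor $F'$ is full and essentially surjective.

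First I would dispose of the two complex conditions. The $2$-cell $\varphi$ is a natural transformation between $F\cdot G$ and the zero functor $\bA\to\bC$; since, by the discussion in \ref{TextPi0Pi1}, both $\pi_0$ and $\pi_1$ send a pair of functors connected by a natural transformation to one and the same morphism, we get $\pi_i(F)\cdot\pi_i(G)=\pi_i(F\cdot G)=\pi_i(0)=0$ for $i=0,1$. Hence both rows in the statement are complexes, and it remains to check that in each case the factorization through the relevant kernel is a regular epimorphism.

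For exactness in $\pi_1(\bB)$, I would use that $\pi_1(K(G))$ factors as $g_1\cdot k_{\pi_1(G)}$ by definition of $g_1$, and that $g_1$ is an isomorphism by Lemma~\ref{LemmaPiPresExact}.1. Then $\pi_1(F)=\pi_1(F')\cdot\pi_1(K(G))=\pi_1(F')\cdot g_1\cdot k_{\pi_1(G)}$, and since $k_{\pi_1(G)}$ is a monomorphism the canonical factorization of $\pi_1(F)$ through $\Ker(\pi_1(G))$ is exactly $\pi_1(F')\cdot g_1$. Now $F'$ is full, so $\pi_1(F')$ is a regular epimorphism by Lemma~\ref{LemmaEqDebEquiOmot}.1, and composing with the isomorphism $g_1$ keeps it a regular epimorphism; this is exactness in $\pi_1(\bB)$.

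Exactness in $\pi_0(\bB)$ is the same argument with $g_0$ in place of $g_1$. The canonical factorization of $\pi_0(F)$ through $\Ker(\pi_0(G))$ is $\pi_0(F')\cdot g_0$, where $\pi_0(F')$ is a regular epimorphism because $F'$ is essentially surjective (Lemma~\ref{LemmaEqDebEquiOmot}.4), and $g_0$ is a regular epimorphism precisely because $\bC$ is assumed proper (Lemma~\ref{LemmaPiPresExact}.2). A composite of two regular epimorphisms in the regular category $\cA$ is again a regular epimorphism, which gives exactness in $\pi_0(\bB)$. I do not anticipate a genuine obstacle: the argument is an assembly of results already proved, and the only delicate point is the bookkeeping of the two factorizations through the kernels together with the observation that the properness of $\bC$ enters only — and exactly — to make $g_0$ (equivalently, the final arrow of the snail sequence of Proposition~\ref{PropNonLinSnail} applied to $G$) a regular epimorphism.
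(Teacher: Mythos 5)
Your proposal is correct and follows essentially the same route as the paper: factor $\pi_i(F)$ as $\pi_i(F')\cdot g_i\cdot k_{\pi_i(G)}$, then invoke Lemma~\ref{LemmaEqDebEquiOmot} for $\pi_i(F')$ and Lemma~\ref{LemmaPiPresExact} for $g_i$. The only difference is that you spell out the complex condition and the identification of the kernel factorization explicitly, which the paper leaves implicit in its commutative diagrams.
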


\begin{proof}
Consider the commutative diagrams
$$\xymatrix{\pi_0(\bA) \ar[r]^-{\pi_0(F)} \ar[d]_{\pi_0(F')} & \pi_0(\bB) \\
\pi_0(\bK(G)) \ar[r]_-{g_0} & \Ker(\pi_0(G)) \ar[u]_{k_{\pi_0(G)}}}
\;\;\;\;\;
\xymatrix{\pi_1(\bA) \ar[r]^-{\pi_1(F)} \ar[d]_{\pi_1(F')} & \pi_1(\bB) \\
\pi_1(\bK(G)) \ar[r]_-{g_1} & \Ker(\pi_1(G)) \ar[u]_{k_{\pi_1(G)}}}$$
By Lemma \ref{LemmaPiPresExact}, $g_0$ is a regular epimorphism and $g_1$ is an isomorphism. Moreover, 
since $F' \colon \bA \to \bK(G)$ is full and essentially surjective, by Lemma \ref{LemmaEqDebEquiOmot} the
arrows $\pi_0(F')$ and $\pi_1(F')$ are regular epimorphisms.
\end{proof}


\begin{thebibliography}{}
\bibliographystyle{alpha}

\bibitem{AMMV}{\sc O. Abbad, S. Mantovani, G. Metere, E.M. Vitale,} Butterflies in a semi-abelian context, 
{\em Advances in Mathematics} 238 (2013) 140--183.

\bibitem{AN} {\sc E. Aldrovandi, B. Noohi,} Butterflies I: morphisms of 2-group stacks, 
{\em Advances in Mathematics} 221 (2009) 687--773.

\bibitem{Ben} {\sc J. B\'enabou,} Some remarks on 2-categorical algebra, {\em Bulletin de
la Soci\'et\'e Math\'ematique de Belgique} 41 (1989) 127--194.

\bibitem{DB1} {\sc D. Bourn,} The tower of n-groupoids and the long cohomology sequence,
{\em Journal of Pure and Appliled Algebra} 62 (1989) 137--183. 

\bibitem{DB2} {\sc D. Bourn,} $3 \times 3$ lemma and protomodularity, 
{\em Journal of Algebra} 236 (2001) 778--795.

\bibitem{RB} {\sc R. Brown,} Fibrations of groupoids, {\em Journal of Algebra} 15 (1970) 103--132.

\bibitem{BP} {\sc M. Bunge, R. Par\'e,} Stacks and equivalence of indexed
categories, {\em Cahiers de Topologie et G\'eom\'etrie Diff\'erentielle Cat\'egorique}
20 (1979) 373--399.

\bibitem{DKV} {\sc J. Duskin, R. Kieboom, E.M. Vitale,} Morphisms of 2-groupoids and
low-dimensional cohomology of crossed modules, {\em Fields Institute Communication
Series} 43 (2004) 227--241.

\bibitem{EG} {\sc T. Everaert, M. Gran,} Homology of n-fold groupoids. {\em Theory and Applications of Categories} 
23 (2010) 22--41.

\bibitem{EKVdL} {\sc T. Everaert, R. Kieboom, T. Van der Linden,} Model structures
for homotopy of internal categories, {\em Theory and Applications of Categories} 15 (2005) 66--94.

\bibitem{GZ} {\sc P. Gabriel and M. Zisman,} Calculus of fractions and homotopy theory, {\em Springer} 1967.

\bibitem{GR} {\sc M. Grandis,} Simplicial homotopical algebras and satellites, {\em
Applied Categorical Structures} 5 (1997) 75--97.

\bibitem{JMMVFibr} {\sc P.-A. Jacqmin, S. Mantovani, G. Metere, E.M. Vitale,} On fibrations between internal groupoids, submitted (arXiv, 2017).

\bibitem{JMMVFract} {\sc P.-A. Jacqmin, S. Mantovani, G. Metere, E.M. Vitale,} Bipullbacks of fractions and the snail lemma, submitted (arXiv, 2017).

\bibitem{GJ} {\sc G. Janelidze,} Galois groups, abstract commutators, and Hopf formula, 
{\em Applied Categorical Structures} 16 (2008) 653--668.

\bibitem{SnailZJEV} {\sc Z. Janelidze, E.M. Vitale,} The snail lemma in a pointed regular category, {\em Journal of Pure and Applied Algebra} 221 (2017) 135--143.

\bibitem{KMV} {\sc S. Kasangian, G. Metere, E.M. Vitale,} The Ziqqurath of exact sequences of n-groupoids. 
{\em Cahiers de Topologie et G\'eom\'etrie Diff\'erentielle Cat\'egoriques} 52 (2011) 2--44.

\bibitem{MMV} {\sc S. Mantovani, G. Metere, E.M. Vitale,} Profunctors in Maltcev categories and fractions of functors, 
{\em Journal of Pure and Applied Algebra} 217 (2013) 1173--1186.

\bibitem{Pronk} {\sc D. Pronk,} Etendues and stacks as bicategories of fractions, {\em Compositio Mathematica} 102 (1996) 243--303.

\bibitem{DR} {\sc D.M. Roberts,} Internal categories, anafunctors and localisations, {\em Theory and Applications
of Categories} 26 (2012) 788--829.

\bibitem{PicBrEV} {\sc E.M. Vitale,} A Picard-Brauer exact sequence of categorical groups,  {\em Journal of Pure and Applied Algebra} 175 (2002) 383--408.

\bibitem{FractEV} {\sc E.M. Vitale,} Bipullbacks and calculus of fractions, {\em Cahiers de Topologie et G\'eom\'etrie Diff\'erentielle Cat\'egorique} 51 (2010) 83--113.

\bibitem{SnailEV} {\sc E.M. Vitale,} The Snail Lemma, {\em Theory and Applications of Categories} 31 (2016) 484--501.

\end{thebibliography}
\end{document}